\documentclass[11pt, psamsfonts, reqno]{amsart}
\usepackage{amsmath}
\usepackage{amsthm}
\usepackage{amssymb}
\usepackage{amscd}
\usepackage{amsfonts}
\usepackage{amsbsy}
\usepackage{epsfig}

\textheight=215mm

\topmargin=5mm
\oddsidemargin=10mm
\evensidemargin=10mm
\textwidth=140mm
\parindent=0cm
\parskip=2mm

\newtheorem{theorem}{Theorem}
\newtheorem{prop}[theorem]{Proposition}
\newtheorem{lemma}[theorem]{Lemma}
\newtheorem{corollary}[theorem]{Corollary}
\newtheorem*{claim}{Claim}
\newtheorem{defn}{Definition}

\def\H#1{{\bf #1}}

\def\ie{{\em i.e.,} }
\def\eg{{\em e.g.} }

\newfont\bbf{msbm10 at 12pt}
\def\eps{\varepsilon}
\def\phi{\varphi}
\def\R{{\mathbb R}}

\def\N{{\mathbb N}}

\def\M{{\mathcal M}}

\def\P{{\mathcal P}}

\def\D{{\mathcal D}}

\def\TF{{\mathcal L}}

\def\cont{{\mathcal C}}
\def\es{{\emptyset}}
\def\sm{\setminus}

\def\supp{\mbox{\rm supp}}

\def\crit{\mbox{Crit}}
\def\scrit{\mbox{\tiny Crit}}

\def\bd{\partial }
\def\?{{\bf ???}}

\def\le{\leqslant}
\def\ge{\geqslant}

\newcommand{\st}{such that }

\newcommand{\bcl}{Borel-Cantelli Lemma }

\def\hmu{\hat{\mu}}
\def\hf{\hat{f}}

\def\hy{\hat{y}}

\def\hU{\hat{U}}

\def\c{Z}
\def\A{{\mathcal A}}
\def\H{\mathcal H}

\begin{document}

\title[Return time statistics of interval maps]{Return time statistics of invariant measures for interval maps with positive Lyapunov exponent}
\author{Henk Bruin and Mike Todd}
\thanks{
This research was supported by EPSRC grant GR/S91147/01.
MT was partially supported by FCT grant SFRH/BPD/26521/2006 and CMUP}
\subjclass[2000]{37E05, 37A05, 37B20, 37D25}
\keywords{return time statistics, interval maps, non-uniform hyperbolicity, equilibrium states, Gibbs property}
\maketitle

\begin{abstract}
We prove that multimodal maps with an absolutely continuous invariant measure have exponential return time statistics around almost every point.  We also show a `polynomial Gibbs property' for these systems, and that the convergence to the entropy in the Ornstein-Weiss formula has normal fluctuations.  These results are also proved for equilibrium states of some H\"older potentials.
\end{abstract}

\section{Introduction}
Return time statistics refers to the distribution of return times
to (usually small) sets $U$ in the phase space of a measure
preserving dynamical system. There have been various approaches to
estimate these distributions in the literature. The earlier
methods pertain to hyperbolic dynamical systems (such as Markov chains \cite{Pit}, and Anosov
diffeomorphisms \cite{Hir}) as these benefit most directly
from the techniques of i.i.d.
stochastic processes, the area in which return time statistics was
studied first.
Further results for interval maps have been obtained in \eg \cite{CG,CGS}.
Gradually methods were developed to treat
non-uniformly hyperbolic systems,  and in \cite{BSTV} it was pointed
out that the return time statistics of a dynamical system coincides
with the return time statistics of a (first return) induced map. If
this first return map itself is hyperbolic, then the above theory can be applied immediately, but the existence of a hyperbolic first return map is
a serious restriction on general dynamical systems, especially
when (recurrent) critical points are present.

In \cite{BrV} this problem was overcome in the context of unimodal
interval maps satisfying a summability condition on the
derivatives along the critical orbit. Instead of a first return
map, a hyperbolic inducing scheme was used, where the inducing
time is a suitable, rather than a first, return to a specific
subset $Y$ of the interval.  The method was to use the so-called `Hofbauer tower' see \cite{kellift,BrCMP}, on which the inducing scheme corresponds to a first return map to a suitable subset $\hat Y$ of the Hofbauer tower.

The properties of the density of the absolutely continuous invariant measure (acip), which are well understood for a map satisfying a summability condition on the
derivatives along the critical orbit, were used extensively in \cite{BrV}.  However, as can be seen in \cite{BR-LSS}, acips are known to exist even when such summability conditions do not hold, and even when the map is multimodal.  In this paper we show that such properties on the density are not required.  This allows us to significantly improve on the class of maps and measures we can deal with.  Here:
\begin{itemize}
\item $f$ can be any non-flat $C^3$ multimodal map,
\item $\mu$ can be an arbitrary invariant probability measure with
positive Lyapunov exponent: $\lambda(\mu) := \int \log |Df| \ d\mu > 0$.
\end{itemize}
In these cases, exponential return time statistics to balls is obtained for any acip, as well as for equilibrium states of a natural class of potentials, $x\mapsto -\delta\log|Df(x)|$ for $\delta$ close to 1.  In addition, we obtain a `polynomial Gibbs property'
and fluctuation results in the Ornstein-Weiss formula, for both acips and equilibrium states of certain H\"older potentials,
provided a very weak growth condition of derivatives along
critical orbits is satisfied.

Let us start by introducing the concept of return time statistics
in more detail. Let $(I,f,\mu)$ be a measure preserving ergodic
dynamical system.  For a measurable set $U_z \subset I$ containing
some $z\in I$, let $\mu_{U_z} = \frac{1}{\mu(U_z)} \mu|_{U_z}$ be
the conditional measure on $U_z$ and $r_{U_z}(x)$ be the first
return time of a point $x\in U_z$ to $U_z$. Whenever $z$ is not a
periodic point, the return time $r_{U_z}(x) \to \infty$ as
$\mu(U_z) \to 0$, but Kac's Lemma states that $\int_{U_z}
r_{U_z}(x) d\mu_{U_z} = 1$. Therefore, when $r_{U_z}$ is
scaled by $\mu(U_z)$, we can hope for a well-defined
distribution $G:[0,\infty) \to \R$ such that, for $t \in [0,\infty)$
$$
\mu_{U_z}\left( x\in U_z\ :\ r_{U_z}(x)\mu(U_z)>t \right) \to G(t)
$$
as $\mu(U_z) \to 0$.  We refer to this as
the \emph{return time statistics of $(f, \mu)$}.  For many mixing systems it is known that the return time statistics are exponential, \ie $G(t) = e^{-t}$,
see \eg \cite{Abadi, Coelho} for various results on some well behaved systems.
This is what we find for systems considered in the latter sections of this paper.  For multiple return time statistics of these cases we expect to find Poissonian laws, see \cite{HSV}.

The natural choice for the sets $U_z$ are balls or cylinder sets, but results on balls are in general harder to prove because of the lack of (H\"older) regularity of indicator functions $\chi_{U_n}$.
Also the Gibbs property only gives information on cylinder sets.
Therefore, in dimension greater than $1$, most results known pertain to cylinder sets, and not (yet) to balls. See \cite{Sau} for more information on this issue.

However, the literature contains examples of behaviour far from exponential in different settings.  For example Coelho and de Faria \cite{CoedeF}
find examples of continuous and discontinuous distributions other than exponential,  for circle diffeomorphisms, see \cite{DM} for further results
in this direction.
Moreover, if we do not assume that the sequence of shrinking sets $U_n$ are
balls/cylinders then any continuous distribution can be
obtained, see Lacroix \cite{Lac}.
In fact, Lacroix also shows that any possible return statistics can be
achieved for cylinders for well-chosen Toeplitz flows.
Thus it is important to emphasise that in this paper we will focus on return time statistics to
balls for non-uniformly expanding interval maps.

We next explain the result of \cite{BSTV} which allows us go to from return time statistics of a first return map to the return time statistics of the original system.
Consider an open set $Y\subset X$ and let $R_Y : Y\rightarrow Y$
be the first return map. As above, we denote the conditional measure on $Y$ by $\mu_Y$, which must be $R_Y$-invariant and ergodic.  For $z\in Y$ and $\alpha>0$, let $U_\alpha=U_\alpha(z)$ be the $\alpha$-ball around $z$.  Let $r_{U_\alpha}(x)$ (resp. $r_{R_Y,U_\alpha}(x)$) be the first return time into $U_\alpha$ for $f$ (resp. $R_Y$). We suppose that $(Y, R_Y,\mu_Y)$ has return time statistics $G(t)$, \ie for $\mu_Y$-a.e. $z\in Y$, there exists $\eps_z(n) \ge 0$ with $\eps_z(n)\to 0$ as $\alpha \to 0$ such that
\begin{equation}\label{eq:return}
\sup_{t\ge0}\left|\mu_{U_\alpha}\left(x\in U_\alpha \ : \ r_{R_Y,U_\alpha}(x) >
\frac{t}{\mu(U_\alpha)}\right) - G(t)\right| <\eps_z(n).
\end{equation}

The key result of \cite{BSTV} is that $(Y, R_Y, \mu_Y)$ enjoys the same distribution as $(I, f,\mu)$:
\begin{theorem}\label{thm:ret map same ret stats}
Suppose that the function $G$ in \eqref{eq:return}
is continuous on $[0,\infty)$.  Then for $\mu$-a.e. $z\in Y$, there exists $\delta_{z}(\alpha) > 0$ with $\delta_{z}(\alpha)\rightarrow 0$ as $\alpha\to 0$ such that:
\[
\left|\mu_{U_\alpha}\left(x\in U_\alpha \ : \ r_{U_\alpha}(x) >
\frac{t}{\mu(U_\alpha)}\right) - G(t)\right| < \delta_{z}(\alpha).
\]
\end{theorem}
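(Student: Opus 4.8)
The plan is to pass to the first-return relation between $f$ and $R_Y$ and then linearise the $f$-return time by means of the ergodic theorem. Since $U_\alpha\subset Y$, every visit of the forward $f$-orbit of a point $x\in U_\alpha$ to $U_\alpha$ is in particular a visit to $Y$; writing $\tau\colon Y\to\N$ for the first return time to $Y$ under $f$ and $S_k:=\sum_{j=0}^{k-1}\tau\circ R_Y^j$ for its Birkhoff sums, this gives the exact identity $r_{U_\alpha}(x)=S_{N(x)}(x)$, where $N(x):=r_{R_Y,U_\alpha}(x)$. Since $\mu_Y$ is the normalisation of $\mu|_Y$ and $U_\alpha\subset Y$, the measure $\mu_{U_\alpha}$ is simultaneously the conditional measure of $\mu$ and of $\mu_Y$ on $U_\alpha$, so no ambiguity arises in \eqref{eq:return}. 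By Kac's Lemma $\int_Y\tau\,d\mu_Y=1/\mu(Y)<\infty$, so $\tau\in L^1(\mu_Y)$ and Birkhoff's ergodic theorem for $(Y,R_Y,\mu_Y)$ gives $S_k(x)/k\to1/\mu(Y)$ for $\mu_Y$-a.e. $x$; the constant $1/\mu(Y)=\int_Y\tau\,d\mu_Y$ is precisely what reconciles the normalisation of the induced return times in \eqref{eq:return} with the normalisation by $\mu(U_\alpha)$ in the conclusion.

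Fix $\eta>0$ and put $B_K^\eta:=\{x\in Y:\ |\mu(Y)S_k(x)/k-1|>\eta\ \text{for some}\ k\ge K\}$. These sets decrease in $K$ and, by the ergodic theorem, $\bigcap_KB_K^\eta$ is $\mu_Y$-null, hence $\mu$-null. This is the point at which the statement must restrict to $\mu$-a.e. $z$: running over a countable family of pairs $(\eta,K)$ and applying the Lebesgue (Besicovitch) density theorem for the Radon measure $\mu$ on the interval, one gets that for $\mu$-a.e. $z$ and every such pair $\mu_{U_\alpha}(B_K^\eta)\to\chi_{B_K^\eta}(z)$ as $\alpha\to0$; and since such a $z$ lies outside $B_K^\eta$ for all large $K$, for each $\eta$ we may fix $K=K(\eta,z)$ with $\mu_{U_\alpha}(B_K^\eta)\to0$. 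One also needs $N(x)\to\infty$ in $\mu_{U_\alpha}$-probability: evaluating \eqref{eq:return} at $t=0$ forces $G(0)=1$, and continuity of $G$ then yields $\mu_{U_\alpha}(N(x)\le K)\to0$ for every fixed $K$. Hence, with $E_K:=B_K^\eta\cup\{N<K\}$, we have $\mu_{U_\alpha}(E_K)\to0$.

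Off $E_K$ the identity $r_{U_\alpha}=S_N$ forces $(1-\eta)N(x)/\mu(Y)\le r_{U_\alpha}(x)\le(1+\eta)N(x)/\mu(Y)$, so for every $s>0$
\[
\{N>(1-\eta)^{-1}\mu(Y)s\}\setminus E_K\ \subseteq\ \{r_{U_\alpha}>s\}\ \subseteq\ \{N>(1+\eta)^{-1}\mu(Y)s\}\cup E_K.
\]
Taking $s=t/\mu(U_\alpha)$, applying $\mu_{U_\alpha}$, letting $\alpha\to0$ and using \eqref{eq:return} for the two extreme events (the factor $\mu(Y)$ converting $\mu(U_\alpha)$ into the induced-map normalisation) together with $\mu_{U_\alpha}(E_K)\to0$, one obtains that $\liminf_{\alpha\to0}$ and $\limsup_{\alpha\to0}$ of $\mu_{U_\alpha}(r_{U_\alpha}>t/\mu(U_\alpha))$ both lie between $G(t/(1+\eta))$ and $G(t/(1-\eta))$. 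Letting $\eta\to0$ and using continuity of $G$ pins this limit to $G(t)$ for each $t$; and since the prelimit functions $t\mapsto\mu_{U_\alpha}(r_{U_\alpha}>t/\mu(U_\alpha))$ are monotone and $G$ is continuous, the convergence is in fact uniform in $t$ (by P\'olya's theorem), which is the estimate $\delta_z(\alpha)$ claimed.

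The main obstacle is the step promoting ``$B_K^\eta$ has small $\mu$-measure'' to ``$\mu_{U_\alpha}(B_K^\eta)$ is small around the \emph{fixed} point $z$'': since $U_\alpha$ itself has small $\mu$-measure, this is false without information about $z$ and genuinely requires $z$ to be a $\mu$-density point of $Y\setminus B_K^\eta$, so the Besicovitch density theorem is doing essential work here and is exactly what forces the conclusion to hold only for $\mu$-a.e. $z$. A secondary point not to be skipped is the input $G(0)=1$ guaranteeing $N(x)\to\infty$; without it the ergodic-theorem control of $S_{N(x)}$ in the sandwich would be vacuous.
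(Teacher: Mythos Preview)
The paper does not prove this theorem itself; it is quoted as ``the key result of \cite{BSTV}'' and used as a black box throughout. Your argument reproduces the proof given in \cite{BSTV}: the exact identity $r_{U_\alpha}=S_N$ with $N=r_{R_Y,U_\alpha}$, Birkhoff's ergodic theorem for the first-return time $\tau$ under $\mu_Y$ (Kac supplying the mean $1/\mu(Y)$), a Lebesgue density-point argument to localise the bad sets $B_K^\eta$ near the fixed centre $z$, the observation that $G(0)=1$ together with continuity forces $N\to\infty$ in $\mu_{U_\alpha}$-probability, and finally the sandwich closed by continuity of $G$. This is correct, and you are right to single out the density step as the place where the restriction to $\mu$-a.e.\ $z$ genuinely enters.

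One remark on the normalisation, which you handle correctly but somewhat implicitly. As literally typeset, \eqref{eq:return} scales the induced return time by $\mu(U_\alpha)$, whereas the intrinsic normalisation for the system $(Y,R_Y,\mu_Y)$ is $\mu_Y(U_\alpha)=\mu(U_\alpha)/\mu(Y)$. It is with the latter reading that your sandwich yields $G(t/(1\pm\eta))$ and closes to $G(t)$; with the former it would close to $G(\mu(Y)\,t)$. Your parenthetical about ``the factor $\mu(Y)$ converting $\mu(U_\alpha)$ into the induced-map normalisation'' is precisely this correction, and it is the reading intended in \cite{BSTV}.
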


Note that the theorem can also be applied to cylinders rather than balls.

This theorem requires a first return map, rather than an arbitrary induced map, and we will use the Hofbauer tower
to bridge that gap. The requirement that $\mu$ has
a positive Lyapunov exponent is needed to `lift'
$\mu$ to this Hofbauer tower.
\emph{Liftability} is an abstract convergence property
(in the vague topology) of Ces\`aro means of a measure $\mu$ imposed
on the Hofbauer tower. It was introduced by Keller
\cite{kellift}. He showed in the context of one-dimensional  maps,
that $\mu$ having positive entropy ($h_{\mu} > 0$) or positive
Lyapunov exponent
both imply liftability. (In fact, for non-atomic measures,
$\lambda(\mu) > 0$ is  equivalent to liftability, see \cite{BrK}.)

Let us now explain which type of
induced systems we will consider.
We fix $\delta>0$ and some interval $Y$.
We say that the interval $Y'$ is a \emph{$\delta$-scaled neighbourhood of $Y$} if, denoting the left and right components of $Y' \sm Y$ by $L$ and $R$ respectively, we have $|L|,|R|=\delta|Y|$.
Next define an inducing scheme $(Y,F)$ as follows.
Let $Y'$ be a $\delta$-scaled neighbourhood of $Y$ and define $\tau_{Y,\delta}(y)$ to be
\[
\min \left\{ i\ge 1 : f^i(y)\in Y \hbox{ and $\exists H \ni y$ with } f^i|_H:H \to Y'\hbox{  homeomorphic} \right\}.
\]
We call this the \emph{first $\delta$-extendible return time to $Y$}.  For $y\in Y$ we let $F(y):=f^{\tau_{Y,\delta}(y)}(y)$.  Given a point $z\in I$ we will take a sequence of nested intervals $\{J_n\}_n$ \st $\bigcap_n J_n = \{z\}$, we will denote
\begin{equation}\label{eq:Fn}
F_n = f^{\tau_{J_n}}:J_n \to J_n
\end{equation}
to be the first $\delta$-extendible return map to $J_n$,
with first $\delta$-extendible return time $\tau_{J_n}=\tau_{J_n,\delta}$.
We explain in Section~\ref{sec:lifting} how the intervals $J_n$ are chosen.
Associated to an $f$-invariant measure of positive Lyapunov exponent,
there is an $F_n$-invariant measure $\mu_{F_n}$ for each there $n$
such that
\[
\mu(A)  = \frac{1}{\int \tau_{J_n}~d\mu_{F_n}}
\sum_i \sum_{k=0}^{i-1}
\mu_{F_n}\left(f^{-k}(A) \cap \{ \tau_{J_n} = i\}\right),
\]
see \eqref{eq:muA}.

We denote the, finite, set of critical points by $\crit$.
We say that $c\in \crit$ is {\em non-flat} if there exists a diffeomorphism $g_c:\R \to \R$ with $g_c(0)=0$ and $1<\ell_c<\infty$ \st for $x$ close to $c$, $f(x)=f(c)\pm|\phi_c(x-c)|^{\ell_c}$.  The value of $\ell_c$ is known as the \emph{critical order} of $c$.
We write $\ell_{\max} := \max_{c \in \scrit} \ell_c$.
Let
\[
NF^k := \left\{ f:I \to I : f \mbox{ is } C^k, \mbox{ each $c \in \crit$
is non-flat and } \inf_{f^n(p) = p} |Df^n(p)| > 1 \right\}.
\]
Maps in $NF^2$ have no wandering intervals, see \cite{MSbook}, and therefore
$\sup_x |\c_n[x]| \to 0$ as $n \to \infty$.
By \cite{SV}, if $f\in NF^3$ we can use the Koebe Lemma
(see \cite{MSbook}) to say that the
first $\delta$-extendible return map $F$ has bounded distortion.
For some of the results below we need an {\em expansion} condition
on critical orbits.
Therefore we can use results from \cite{BR-LSS}
(namely Main Theorem' and Theorem 1 respectively)
which state that a map $f \in NF^3$ with
$\min_{c \in \scrit} \liminf_n |Df(f(c))| \ge L$ has an acip, and also satisfies
a backward contraction property called $BC(2)$.
The number $L$ depends only on the
cardinality of the critical set and the maximal critical order $\ell_{\max}$
of $f$. With this in mind we define
\[
NF_+^k  := \left\{ f \in NF^k : \
\min_{c \in \scrit} \liminf_n |Df(f(c))| \ge L(\ \# \crit(f), \ell_{\max}(f)\ ) \right\}.
\]
Any map in this class cannot be infinitely renormalisable.

The following is our first main theorem.  This theorem also holds for return time statistics to cylinders.

\begin{theorem}
Let $f\in NF^3$ and $(I,f, \mu)$ be liftable.  Suppose that for $\mu$-a.e. $z\in I$ there exists $\delta>0$ and a nested sequence of intervals
$\{J_n\}_n$ \st
$\cap_nJ_n=\{z\}$ and for all $n$, the system $(J_n, F_n, \mu_{F_n})$
from \eqref{eq:Fn} has return time statistics given by a continuous function
$G:[0,\infty) \to [0,1]$.
Then $(I,f, \mu)$ also has return time statistics given by $G$.
\label{thm:induced same as original}
\end{theorem}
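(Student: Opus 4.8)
The plan is to pass from the sequence of $\delta$-extendible return maps $F_n$ to a single first-return map to which Theorem~\ref{thm:ret map same ret stats} can be applied, and to make this transition on the Hofbauer tower where the $\delta$-extendible inducing scheme is genuinely a first return. First I would use liftability: since $(I,f,\mu)$ is liftable, there is an ergodic $\hat f$-invariant measure $\hat\mu$ on the Hofbauer tower $\hat I$ projecting to $\mu$. The construction of the $J_n$ (from Section~\ref{sec:lifting}) provides, for $\mu$-a.e. $z$, a point $\hat z$ in the tower and a subset $\hat J_n \subset \hat I$ (a copy of $J_n$ sitting on an appropriate level) such that the first $\delta$-extendible return map $F_n$ on $J_n$ is conjugate, via the projection $\pi:\hat I \to I$ restricted to $\hat J_n$, to the \emph{first return map} $R_{\hat J_n}$ of $\hat f$ to $\hat J_n$, and $\mu_{F_n}$ corresponds to $\hat\mu_{\hat J_n}$. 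This is the key structural point, and it is exactly the mechanism indicated in the excerpt ("the inducing scheme corresponds to a first return map to a suitable subset $\hat Y$ of the Hofbauer tower"); verifying the conjugacy and the measure correspondence carefully is where most of the bookkeeping lives.

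Next, since $R_{\hat J_n}$ is a genuine first return map of the system $(\hat I,\hat f,\hat\mu)$, and by hypothesis $(J_n,F_n,\mu_{F_n})$ — hence $(\hat J_n, R_{\hat J_n}, \hat\mu_{\hat J_n})$ — has return time statistics given by the continuous function $G$, I would apply Theorem~\ref{thm:ret map same ret stats} with $Y = \hat J_n$ to conclude that $(\hat I, \hat f, \hat\mu)$ itself has return time statistics $G$ around $\hat z$, for $\hat\mu$-a.e.\ such $\hat z$. One must check the hypothesis of that theorem: $G$ is continuous on $[0,\infty)$ by assumption, and the $\alpha$-balls $U_\alpha(\hat z)$ in the tower are the natural objects — but here one should be mildly careful, because the balls are taken around $\hat z$ in the (intrinsic) metric of the Hofbauer tower, and we must ensure these correspond, for small $\alpha$, to balls around $z$ in $I$ under $\pi$. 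Since $\pi$ is a local isometry on each level and $\hat z$ lies in the interior of its level, for $\alpha$ small enough $\pi$ maps $U_\alpha(\hat z)$ isometrically onto $U_\alpha(z)$, so $r_{U_\alpha(\hat z)}$ under $\hat f$ equals $r_{U_\alpha(z)}$ under $f$ along the relevant orbit, and $\hat\mu(U_\alpha(\hat z)) = \mu(U_\alpha(z))$.

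Finally I would project back down: the identification of return times and of conditional measures in the previous step shows that
\[
\hat\mu_{U_\alpha(\hat z)}\!\left(\hx : r_{U_\alpha(\hat z)}(\hx) > \tfrac{t}{\hat\mu(U_\alpha(\hat z))}\right)
= \mu_{U_\alpha(z)}\!\left(x : r_{U_\alpha(z)}(x) > \tfrac{t}{\mu(U_\alpha(z))}\right),
\]
so the convergence to $G(t)$ (uniformly in $t$, with rate $\delta_{\hat z}(\alpha)\to 0$ coming from Theorem~\ref{thm:ret map same ret stats}) transfers verbatim to $(I,f,\mu)$ around $z$. Running this for $\mu$-a.e.\ $z$ — using that the full-measure set on the tower pushes forward to a full-measure set on $I$ — gives the theorem. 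The main obstacle, as noted, is the first step: setting up the Hofbauer tower and proving that the $\delta$-extendible return maps $F_n$ become honest first return maps $R_{\hat J_n}$ upstairs, with the measures matching; once that dictionary is in place, the rest is an application of Theorem~\ref{thm:ret map same ret stats} plus the elementary observation that $\pi$ is locally isometric.
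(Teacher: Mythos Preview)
Your overall strategy---lift to the Hofbauer tower, identify $F_n$ with a first return map $R_{\hat J_n}$ upstairs, apply Theorem~\ref{thm:ret map same ret stats} on the tower, then project---matches the paper's. The first step is indeed Lemma~\ref{lem:ret induced same}. However, the final projection step contains a genuine error.

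You assert that for small $\alpha$, $\pi$ identifies $U_\alpha(\hat z)$ with $U_\alpha(z)$ so that $\hat\mu(U_\alpha(\hat z)) = \mu(U_\alpha(z))$ and $r_{U_\alpha(\hat z)} = r_{U_\alpha(z)}\circ\pi$. Neither equality holds. The projection $\pi$ is countable-to-one, and what satisfies $\hat\mu(\cdot)=\mu(U_\alpha(z))$ is the \emph{full} preimage $\pi^{-1}(U_\alpha(z))$, not a single copy $U_\alpha(\hat z)$ on one level. Likewise, the $\hat f$-orbit of $\hat x$ can return to $\pi^{-1}(U_\alpha(z))$ on a different level long before it returns to the particular copy $U_\alpha(\hat z)$, so in general $r_{U_\alpha(\hat z)}(\hat x) > r_{\pi^{-1}(U_\alpha(z))}(\hat x) = r_{U_\alpha(z)}(\pi(\hat x))$. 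Your displayed identity between the two distribution functions is therefore false, and the ``elementary observation that $\pi$ is locally isometric'' does not bridge the gap.

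The paper confronts exactly this discrepancy. It works with the full preimage $\pi^{-1}(U)$ (so that measures and first return times do match those downstairs) and with the subset $\hat U_n \subset \pi^{-1}(U)$ consisting of those copies whose $\delta$-scaled neighbourhood fits in their domain. The crucial ingredient you are missing is Lemma~\ref{lem:extendible}, which shows via a density-point argument for the level densities $\rho_D = d(\hat\mu\circ\pi|_D^{-1})/d\mu$ that
\[
\frac{\hat\mu(\pi^{-1}(U)\setminus \hat U_n)}{\mu(U)} \to 0 \quad \text{uniformly over } U\subset J_n \text{ as } n\to\infty.
\]
With this in hand, the paper decomposes $\hat\mu_{\pi^{-1}(U)}\bigl(r_{\pi^{-1}(U)} > t/\mu(U)\bigr)$ into three pieces $I+II+III$: the mass outside $\hat U_n$, the event $\{r_{\hat U_n} > t/\mu(U)\}$ inside $\hat U_n$, and the event $\{r_{\hat U_n} > r_{\pi^{-1}(U)}\}$. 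Terms $I$ and $III$ vanish by Lemma~\ref{lem:extendible}, and term $II$ is handled by Theorem~\ref{thm:ret map same ret stats} applied to the first return system on $\hat J_n$. A matching lower bound $II-III$ completes the argument. In short, the missing idea is that one must quantify how much of $\pi^{-1}(U)$ lies in the ``good'' set $\hat U_n$, and control the difference between returning to $\pi^{-1}(U)$ and returning to $\hat U_n$; this is not automatic from local isometry of $\pi$.
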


We will apply this theorem to a class of equilibrium states, which includes acips. We say that $\mu$ is an \emph{equilibrium state} of the \emph{potential} $\phi:I \to \R$ if
its \emph{free energy} $h_\mu + \int \phi d\mu$ is equal to the \emph{pressure}
\[
P(\phi) := \sup_{\nu \in \M_{erg}}
\left\{h_\nu + \int \phi~d\nu:\int\phi~d\mu<\infty\right\},
\]
where $\M_{erg}$ denotes the set of all ergodic invariant
probability measures.  We say that $m_\phi$ is a \emph{conformal measure} for $\phi$ if for all Borel sets $A\subset I$ with $f:A \to f(A)$ bijective,
$$
m_\phi(f(A))=\int_A e^{-\phi}~dm_\phi.
$$
We use the abbreviation $m_\delta$ for the conformal measure for the potential
$\phi_\delta:x\mapsto -\delta\log|Df(x)|$.
For our first application of Theorem~\ref{thm:induced same as original}, we will be interested in potentials of the form $\phi_\delta:x\mapsto -\delta\log|Df(x)|$.
For the specific choice $\phi_1 = -\log|Df|$, any equilibrium state must be an acip, see \cite{Led, Ruelle}.

\begin{theorem}
Suppose that $f\in NF^3$ and assume that the equilibrium state
$\mu_\delta$ and conformal measure $m_{\delta}$
exist for $\delta\in [0,1]$, and $\mu_\delta \ll m_{\delta}$.
Then $(I,f,\mu_\delta)$ has exponential return time statistics
(\ie $G(t) = e^{-t}$) to balls around $\mu_\delta$-a.e. point.
\label{thm:exp stats}
\end{theorem}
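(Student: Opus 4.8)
The strategy is to verify the hypotheses of Theorem~\ref{thm:induced same as original} with $G(t)=e^{-t}$. Thus two things are needed: that $(I,f,\mu_\delta)$ is liftable, and that for $\mu_\delta$-a.e.\ $z\in I$ there are some $\delta'>0$ and a nested sequence $J_n\downarrow\{z\}$ for which every system $(J_n,F_n,\mu_{F_n})$ of the form \eqref{eq:Fn} has return time statistics $G(t)=e^{-t}$. (We write $\delta'$ for the extendibility parameter so as not to confuse it with the exponent $\delta$.)

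\emph{Liftability.} By Keller~\cite{kellift} it suffices to show $\lambda(\mu_\delta)>0$; the inducing schemes of Section~\ref{sec:lifting}, together with the measures $\mu_{F_n}$, are then available for $\mu_\delta$-a.e.\ $z$. It is standard that the acip $\mu_1$ has $\lambda(\mu_1)>0$ (see \cite{Led}); for $\delta=1$ this is the assertion. For $\delta<1$, Rokhlin's formula gives $h_{\mu_1}=\lambda(\mu_1)$, so by the variational principle
\[
P(\phi_\delta)\ \ge\ h_{\mu_1}+\int\phi_\delta\,d\mu_1\ =\ (1-\delta)\,\lambda(\mu_1)\ >\ 0.
\]
If $\lambda(\mu_\delta)$ were $0$, the equilibrium property of $\mu_\delta$ would force $h_{\mu_\delta}=P(\phi_\delta)>0$, contradicting Ruelle's inequality $h_{\mu_\delta}\le\max\{0,\lambda(\mu_\delta)\}=0$; hence $\lambda(\mu_\delta)>0$.

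\emph{The induced systems are Gibbs--Markov.} Fix $z$ admitting such an inducing scheme. Since $f\in NF^3$, the $\delta'$-extendibility built into $\tau_{J_n}$ furnishes a definite Koebe space, so by \cite{SV} and the Koebe Lemma each branch of $F_n$ maps homeomorphically onto $J_n$ with uniformly bounded distortion. Iterating the conformality relation for $m_\delta$ along the critical-point-free orbit pieces, one sees that $m_\delta|_{J_n}$ is conformal for the locally H\"older induced potential $x\mapsto-\delta\log|DF_n(x)|$; being the lift of $\mu_\delta\ll m_\delta$, the measure $\mu_{F_n}$ is then the unique $F_n$-invariant probability measure absolutely continuous with respect to $m_\delta|_{J_n}$, and its density is bounded away from $0$ and $\infty$. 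Hence $(J_n,F_n,\mu_{F_n})$ is a full-branch uniformly expanding interval map with a genuine Gibbs property for $-\delta\log|DF_n|$ (even though only a polynomial Gibbs property holds for $\mu_\delta$ on $I$), and in particular has a spectral gap for the associated transfer operator and exponential decay of correlations.

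\emph{Return time statistics of the induced systems, and conclusion.} The remaining, and main, point is to pass from this hyperbolic structure to return time statistics $G(t)=e^{-t}$ for $(J_n,F_n,\mu_{F_n})$, as required by Theorem~\ref{thm:induced same as original}. Because $J_n$ is an interval and the branches of $F_n$ are monotone with bounded distortion, a ball $U_\alpha(z)\subset J_n$ agrees with a union of finitely many cylinders up to boundary pieces of controlled relative measure, so the spectral-gap / $\psi$-mixing arguments available for such Gibbs--Markov maps (as in \cite{BrV,BSTV}, with the hitting-time asymptotics as in \cite{Abadi}) apply and give, for $\mu_{F_n}$-a.e.\ $z\in J_n$, that the $F_n$-return time into $U_\alpha(z)$, scaled by $\mu_{F_n}(U_\alpha(z))$, is asymptotically distributed according to $e^{-t}$, uniformly in $t\ge0$ as $\alpha\to0$. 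This is where the real work lies: handling balls rather than cylinders, and verifying the no-short-return condition that separates the pure exponential law from a compound one, for $\mu_{F_n}$-a.e.\ $z$ and every $n$. Since $t\mapsto e^{-t}$ is continuous, Theorem~\ref{thm:induced same as original} then yields exponential return time statistics for $(I,f,\mu_\delta)$ to balls around $\mu_\delta$-a.e.\ point.
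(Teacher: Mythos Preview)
Your approach coincides with the paper's: verify the hypotheses of Theorem~\ref{thm:induced same as original} with $G(t)=e^{-t}$ by showing that each induced system $(J_n,F_n,\mu_{F_n})$ has exponential return time statistics to balls. The paper packages this last step more efficiently than you do. It observes that $F_n$ is a Rychlik map in the sense of Definition~\ref{def:Rychlik}, with induced potential $\Phi_n=-\delta\log|DF_n|-P\,\tau_{J_n}$ and conformal measure built from $m_\delta$, and then invokes Theorem~\ref{thm:exp for R} (which is Theorem~3.2 of \cite{BSTV}) as a black box. That theorem already delivers exponential return time statistics to \emph{balls} for Rychlik systems, so your paragraph gesturing towards spectral gaps, $\psi$-mixing, and ball-versus-cylinder approximation---which you yourself flag as ``where the real work lies''---is not needed here: that work is done once and for all in \cite{BSTV}. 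The paper also isolates why Rychlik's expansivity condition $\sup\Phi_n<0$ holds across the whole range $\delta\in[0,1]$ (namely because the pressure term is nonnegative there); your sketch does not single this out.

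One genuine wrinkle in your liftability paragraph: to obtain $P(\phi_\delta)>0$ for $\delta<1$ you invoke the acip $\mu_1$, but the theorem's hypotheses concern a \emph{fixed} $\delta\in[0,1]$ and do not assume that an acip exists. Moreover, your contradiction argument only excludes $\lambda(\mu_\delta)=0$, not $\lambda(\mu_\delta)<0$; the latter requires a separate fact about $NF^3$ maps. The paper, for its part, glosses over liftability entirely, tacitly relying on the fact (cf.\ \cite{kellift,BrK,BTeqnat}) that equilibrium states $\mu_\delta\ll m_\delta$ in this setting have positive Lyapunov exponent.
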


Note that in the case $\delta=1$, the conformal measure is Lebesgue and so all that is required is the existence of an acip.
The proof of Theorem~\ref{thm:exp stats} implies the following corollary, which specifies cases where the equilibrium state and (for the induced system)
the conformal measures are known to exist.  For the existence results in parts (2) and (3), see \cite{BTeqnat}.

\begin{corollary}
Suppose that $f\in NF_+^3$.
\begin{enumerate}
\item  On each transitive component of $(I,f)$, there is an acip $\mu_1$, and the system $(I,f,\mu_1)$ has exponential return time statistics.
\item Suppose that for some $\delta_0 \in (0,1)$, $C > 0$ and $\beta > \ell_{max}(1+\frac1{\delta_0})
- 1$,
\[
|Df^n(f(c))| \ge C n^\beta \quad \mbox{ for all } c \in \crit
\mbox{ and } n \ge 1.
\]
Then there exists $\delta_1 \in (\delta_0,1)$ \st for all $\delta\in (\delta_0,1)$, on each transitive component of $(I,f)$ there exists a unique equilibrium state $\mu_\delta$ for the potential $\phi_\delta$, and $(I,f,\mu_\delta)$ has exponential return time statistics.
\item Suppose that there exist $C,
\alpha > 0$ \st
\[
|Df^n(f(c))| \ge C e^{\alpha n} \mbox{ for all } c \in \crit \mbox{
and } n \in \N.
\]
Then there exist $\delta_1<1<\delta_2$ \st on each transitive component of $(I,f)$, there is a unique equilibrium state $\mu_{\delta}$  for the potential $\phi_\delta$, and $(I,f,\mu_\delta)$ has exponential return time statistics.\footnote{In \cite{PeS}, the existence of an equilibrium state $\mu_\delta$
was shown for all $\delta \in (-\eps,1+\eps)$
for a class of logistic maps near the Chebyshev polynomial $f(x) = 4x(1-x)$, where $\eps>0$.}
\end{enumerate}
\label{cor:applications}
\end{corollary}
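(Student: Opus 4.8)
The plan is to deduce all three parts from Theorem~\ref{thm:exp stats} (and, for part of case~(3), from its proof), so the only real work is to exhibit, in each situation, the objects that theorem requires: an ergodic equilibrium state $\mu_\delta$ for $\phi_\delta$, the corresponding conformal measure (and the induced conformal measure actually used in the proof), and the absolute continuity $\mu_\delta \ll m_\delta$. Existence of these objects comes from \cite{BR-LSS} in case~(1) and from \cite{BTeqnat} in cases~(2) and (3). Before applying them I would dispose of the transitive-component bookkeeping: a map $f \in NF_+^3$ is not infinitely renormalisable, so $(I,f)$ has finitely many transitive components, every invariant probability measure of positive Lyapunov exponent is a finite convex combination of ergodic measures carried by them, and on a single component the equilibrium state of $\phi_\delta$, when it exists, is unique and ergodic. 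Since return time statistics is a property of the individual ergodic measure, it suffices to argue on one transitive component at a time.

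For (1), the Main Theorem of \cite{BR-LSS} gives an acip $\mu_1$ on each transitive component; it has positive Lyapunov exponent, hence is liftable (cf.\ Section~\ref{sec:lifting}), the conformal measure $m_1$ for $\phi_1 = -\log|Df|$ is Lebesgue, and $\mu_1 \ll m_1$ automatically, so Theorem~\ref{thm:exp stats} with $\delta = 1$ yields exponential return time statistics. For (2), under the polynomial growth hypothesis \cite{BTeqnat} provides, for each $\delta$ in the asserted subinterval of $(\delta_0,1)$, a unique equilibrium state $\mu_\delta$ for $\phi_\delta$ on each transitive component together with $m_\delta$ and $\mu_\delta \ll m_\delta$; as $\delta \le 1$ this is precisely the hypothesis of Theorem~\ref{thm:exp stats} and we are done. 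For (3), under the exponential growth hypothesis \cite{BTeqnat} yields the same package of objects, now on a two-sided neighbourhood $(\delta_1,\delta_2) \ni 1$. For $\delta \le 1$ one quotes Theorem~\ref{thm:exp stats} verbatim; for $1 < \delta < \delta_2$ I would observe that its proof --- which runs the $\delta'$-extendible inducing scheme $(J_n,F_n)$, constructs the induced conformal measure with the polynomial Gibbs property, and then applies Theorems~\ref{thm:induced same as original} and \ref{thm:ret map same ret stats} --- nowhere uses $\delta \le 1$; it needs only that the induced equilibrium and conformal measures exist with $\tau_{J_n}$ integrable against $\mu_{F_n}$, and the exponential recurrence of the critical orbits keeps the inducing scheme summable (and the distortion estimates valid) on a full neighbourhood of $\delta = 1$.

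The main obstacle is this last robustness claim: one must confirm that pushing $\delta$ above $1$ does not spoil integrability of the inducing time against the induced measure, nor the Gibbs/Koebe distortion bounds, and that the existence results imported from \cite{BR-LSS,BTeqnat} indeed come in the exact shape the proof of Theorem~\ref{thm:exp stats} consumes --- an ergodic equilibrium state plus an absolutely continuous induced conformal measure on each transitive component. The rest is the routine decomposition into transitive pieces and a direct citation.
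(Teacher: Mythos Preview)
Your approach is essentially the paper's: reduce everything to Theorem~\ref{thm:exp stats} and cite \cite{BR-LSS} (for the acip) and \cite{BTeqnat} (for $\delta\neq 1$) for existence of the needed equilibrium and conformal measures; the paper's own proof is a three-sentence version of exactly this, adding only the remark that \cite{BTeqnat} supplies merely the \emph{induced} conformal measure, which is all the proof of Theorem~\ref{thm:exp stats} actually consumes. One caveat: your claim that the proof of Theorem~\ref{thm:exp stats} ``nowhere uses $\delta\le 1$'' is not literally true---the verification of Rychlik's expansivity condition~(3) invokes $P(-\delta\log|DF_n|)\ge 0$, which is argued via monotonicity in $\delta$ and $P(-\log|DF_n|)\ge 0$; for $\delta>1$ one must instead rely on the exponential tails from \cite{BTeqnat} to keep $\sup\Phi_n<0$ after possibly shrinking the interval $(\delta_1,\delta_2)$, a point the paper also leaves implicit.
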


We also
consider the class of potentials
$$
\H := \{ \phi:I \to \R : \phi \mbox{ is  H\"older and }
\sup\phi-\inf\phi<h_{top} \},
$$
where $h_{top}$ is the topological entropy of $f$.
Keller proved in \cite{kellholder} that if $f$ is piecewise monotone
(\ie $f$ has finitely many continuous monotone branches, but discontinuities between branches are allowed) and $\phi\in \H$, then on each transitive component of $(I,f)$ there is a unique equilibrium state $\mu$ for $(I,f,\phi)$.
See also \cite{BTeqgen} where a similar result was proved, with weaker conditions on $\phi$, but stronger conditions on $f$.  The following proposition gives the return time statistics to balls for these measures.  For a similar result, but for cylinders, see Paccaut \cite{Pac}.

\begin{prop}
Suppose that $f$ is piecewise monotone, and $\phi\in \H$ is a potential.
Then for every equilibrium state $\mu$ for this potential,
$(I,f,\mu)$ has exponential return
time statistics to balls around $\mu$-a.e.\ point.
\label{prop:exp stats cyl}
\end{prop}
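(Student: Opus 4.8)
The plan is to run the same argument as in the proof of Theorem~\ref{thm:exp stats}: reduce the return time statistics of $(I,f,\mu)$ to those of an induced full-branch Markov system sitting over a suitable subset of the Hofbauer tower, invoke the classical exponential law for such systems, and transfer the conclusion back down using the reduction theorem of \cite{BSTV}. The only genuinely new ingredient is that, because $f$ is merely piecewise monotone, the hyperbolicity that was previously supplied by the $NF^3$/Koebe machinery must here be supplied by Keller's transfer-operator theory \cite{kellholder} for the H\"older potential $\phi$, which is exactly why the condition $\sup\phi-\inf\phi<h_{top}$ defining $\H$ is imposed.

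First I would verify that $(I,f,\mu)$ is liftable. Since $\phi\in\H$ gives $\sup\phi-\inf\phi<h_{top}$, testing the pressure against a measure of maximal entropy $\nu\in\M_{erg}$ yields $P(\phi)\ge h_\nu+\int\phi\,d\nu\ge h_{top}+\inf\phi$, so for the equilibrium state $\mu$ we get $h_\mu=P(\phi)-\int\phi\,d\mu\ge h_{top}-(\sup\phi-\inf\phi)>0$; by Keller \cite{kellift}, positive entropy of a one-dimensional map forces liftability, so $\mu$ lifts to a (tower-invariant) measure $\hat\mu$ on the Hofbauer tower $\hat I$, which is a countable-state piecewise monotone Markov system, topologically mixing on each transitive component.

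Next, following Section~\ref{sec:lifting}, I would fix a $\hat\mu$-typical level of $\hat I$ and, for $\mu$-a.e.\ $z$, a nested sequence $\{J_n\}$ with $\bigcap_nJ_n=\{z\}$ such that the inducing scheme $(J_n,F_n)$ of \eqref{eq:Fn} corresponds to a first return map on a subset $\hat J_n\subset\hat I$. Because the return is to (a piece of) a single Markov state, this induced map is full-branch; and the Lasota--Yorke estimate behind Keller's theorem --- available precisely under the spectral assumption on $\phi$ --- passes to the induced potential $S_{\tau_{J_n}}\phi$, so its variations over cylinders decay exponentially. Hence each $F_n$ has bounded distortion and $\mu_{F_n}$ is a Gibbs measure for $S_{\tau_{J_n}}\phi$, so $(J_n,F_n,\mu_{F_n})$ is a Gibbs--Markov (Rychlik-type) system. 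Such systems are $\psi$-mixing, and for $\psi$-mixing systems the return time statistics are exponential (\cite{Abadi,Hir}); moreover, in dimension one a ball in $J_n$ is an interval and hence has bounded variation, so no regularity is lost in passing from cylinders to balls, and a Gibbs estimate makes the boundary contribution negligible for $\mu_{F_n}$-a.e.\ centre. Thus $(J_n,F_n,\mu_{F_n})$ has return time statistics $G(t)=e^{-t}$ for every $n$, and applying Theorem~\ref{thm:ret map same ret stats} to the first return map on the tower (which here plays the role that Theorem~\ref{thm:induced same as original} plays in the $NF^3$ case) gives that $(I,f,\mu)$ has exponential return time statistics to balls around $\mu$-a.e.\ point.

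The main obstacle is the middle step in the non-differentiable setting: with no Koebe lemma available, bounded distortion of $F_n$ cannot be read off from a $C^3$ structure and must instead be built from the H\"older norm of $\phi$ through the Lasota--Yorke/spectral-gap mechanism on the tower, with constants uniform in $n$. A secondary point is to make precise the correspondence between the first return map on the tower and the interval maps $F_n$ (the content of Section~\ref{sec:lifting}) together with the cylinder-to-interval upgrade for the induced systems, which requires quantitative control of how $\mu_{F_n}$ spreads mass near the endpoints of dynamically defined intervals; both are routine given the Gibbs property but need to be recorded for piecewise monotone $f$.
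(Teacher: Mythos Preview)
Your approach is genuinely different from the paper's, and considerably more elaborate. The paper does not induce at all here. It observes that Keller \cite{kellholder} already provides a spectral gap for the Perron--Frobenius operator $\TF_\phi$ acting on the space $BV_{1,1/p}$ of functions of bounded $p$-variation, a space that contains indicator functions of intervals. This gives exponential decay of correlations for such observables directly on $(I,f,\mu)$, and then the argument behind \cite[Theorem~3.2]{BSTV} (following \cite{HSV}) converts this into exponential return time statistics to balls, with no tower and no inducing. The paper even flags this explicitly in the paragraph preceding the proposition: ``in contrast to our results for acips \ldots\ we can prove this result directly, with no inducing.''

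Your route --- lift to the Hofbauer tower via $h_\mu>0$, induce, verify a Rychlik/Gibbs--Markov structure for $(J_n,F_n,\mu_{F_n})$, then descend --- is the template of Theorem~\ref{thm:exp stats}, and it is plausible it can be pushed through. The liftability argument is correct, and the proof of Theorem~\ref{thm:induced same as original} does not actually use $C^3$ smoothness beyond what the tower construction requires, so the descent step should survive. But the ``main obstacle'' you yourself identify --- replacing Koebe distortion by H\"older control of $S_{\tau_{J_n}}\phi$, uniformly in $n$, and exhibiting an induced conformal measure so that Definition~\ref{def:Rychlik} applies verbatim --- is genuine extra work that the paper simply avoids. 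What your approach would buy, if completed, is a treatment parallel to the acip case; what the paper's approach buys is a two-line proof, because Keller's quasi-compactness on $BV_{1,1/p}$ already handles balls at the level of the original map.
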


In the setting of the above proposition, Keller proved exponential decay of correlations for the original system $(I, f,\mu)$ for a class of observables which includes characteristic functions on balls.  Therefore, in contrast to our results for acips, using the ideas of \cite{BSTV} we can prove this result directly, with no inducing.

Our next result concerns a weak version of the Gibbs property.
Let $\P_1$ be the partition of $I$ into maximal (closed) intervals
such that $f:Z \to f(Z)$ is a homeomorphism for each $Z \in \P_1$.
Refine the partition $\P_n = \bigvee_{i=0}^{n-1} f^{-i} \P_1$ and by convention let $\P_0 = \{I\}$.  We refer to the elements of $\P_n$ as cylinder sets, and we write $\c_n[x]$ to indicate the
cylinder set in $\P_n$ containing $x$.  If $x\in \bd \c_n$ then $\c_n[x]$ is not unique, but this applies only to countably many points.

Let $S_n \phi(x) := \sum_{k=0}^{n-1} \phi \circ f^k(x)$ be the $n$-th ergodic
sum along the orbit of $x$.
We say that $\mu$ satisfies the \emph{polynomial Gibbs property}
with exponent $\kappa$ if for $\mu$-a.e. $x$, there is $n_0=n_0(x)$ such that
\begin{equation}\label{eq:polGibbs}
\frac{1}{n^\kappa} \le \frac{\mu(\c_n[x])}{e^{S_n \phi(x)-nP(\phi)}}
\le n^\kappa,
\end{equation}
for all $n \ge n_0$.  If $\mu$ is an acip, and hence an equilibrium state for the potential
$\phi = -\log |Df|$, then the pressure $P(\phi) = 0$ and the quantity to estimate in \eqref{eq:polGibbs} simplifies to $\mu(\c_n[x]) |Df^n(x)|$.  Formula \eqref{eq:polGibbs} was used in \cite{BrV} and can be compared with the `weak Gibbs property' given by Yuri \cite{Yuri},
for which the Gibbs constants depend only on $n$, and the `non-lacunary measures' of \cite{OVeq} where the constants depend on $x$ and $n$, but can grow at any subexponential rate.

\begin{theorem}
For any $f\in NF_+^3$, the following hold:
\begin{itemize}
\item[(a)] For each transitive component of $(I,f)$, there is a unique acip $\mu$ and $\mu$ is polynomially Gibbs.
More precisely, if
$\gamma>4\ell_{max}^2$ and $\gamma' > 2$, then for $\mu$-a.e. $x$ there exists
$n_0$ \st for $n\ge n_0$,
$$
\frac1{n^{2\gamma}} \le \frac{|f^n(\c_n[x])|}{n^{\gamma}}\le
\mu(\c_n[x])|Df^n(x)|\le n^{\gamma'}.
$$
\item[(b)]
For any potential $\phi\in \H$, for each transitive component of $(I,f)$ there exists a unique equilibrium state $\mu$, which is polynomially Gibbs.
\end{itemize}
\label{thm:weak G}
\end{theorem}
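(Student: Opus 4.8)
\emph{Sketch.} The plan is, for both (a) and (b), to reduce to the \emph{uniform} Gibbs property of a hyperbolic induced (resp.\ extended) system and then transfer it back to $f$ with only a polynomial loss, the loss being produced by a \bcl estimate on how far the orbit of a $\mu$-typical point strays from the ``good'' part of the induced structure by time $n$. The two key inputs are (i) that $f\in NF_+^3$ has, via the backward contraction property $BC(2)$ and the Koebe Lemma, a $\delta$-extendible inducing scheme whose return time has an exponential tail (see \cite{BR-LSS,BTeqnat}), and (ii) that in (b) a piecewise monotone map together with a potential in $\H$ has, by Keller \cite{kellholder}, a spectral gap on bounded variation functions on the Hofbauer extension, hence again an exponential tail.

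\emph{Part (a).} Fix a transitive component and let $F=f^{\tau}\colon Y\to Y$ be a first $\delta$-extendible return map as in \eqref{eq:Fn}, with $\delta$ and $Y$ chosen so that Koebe applies ($f\in NF^3$) and the tail bound $\mu_F(\tau>k)\le C\theta^{k}$ holds for some $\theta\in(0,1)$. Since $F$ is a uniformly expanding full-branch Markov map with uniformly bounded distortion, its acip $\mu_F$ is uniformly Gibbs: there is $K\ge1$ with
\[
K^{-1}\le\frac{\mu_F(\c^F_k[y])}{|DF^k(y)|^{-1}}\le K
\qquad\text{and}\qquad
K^{-1}\le\frac{\mu_F(\c^F_k[y])}{|\c^F_k[y]|}\le K
\]
for all $k\ge1$ and $y\in Y$, where $\c^F_k$ is an $F$-cylinder; and $\mu$ is obtained from $\mu_F$ by the Abramov-type formula relating $\mu$ and $\mu_F$. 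Take a $\mu$-typical $x$; after replacing $x$ by the point of its orbit at the first entry time into $Y$ (which only changes the $x$-dependent constants) we may assume $x\in Y$. With $\tau_j(x)=\sum_{i<j}\tau(F^ix)$, let $k=k(n)$ be given by $\tau_k(x)\le n<\tau_{k+1}(x)$ and put $m=n-\tau_k(x)<\tau(F^kx)$. Birkhoff applied to $\tau\in L^1(\mu_F)$ gives $k(n)\asymp n$ for $n\ge n_0(x)$; the exponential tail and the \bcl (with a summably chosen exponent) give $\tau(F^jx)\le C'\log j$, hence $m\le C'\log n$, for all large $j$ and $n$. Writing $y_k=F^k(x)\in Y$ we have $|Df^n(x)|=|DF^k(x)|\cdot|Df^m(y_k)|$ with $|Df^m(y_k)|$ between $n^{-\gamma''}$ and $n^{\gamma''}$ (since $m<\tau(y_k)$ and $m\le C'\log n$), and the same control of $m$ plus Koebe compares $|f^n(\c_n[x])|$ and the nesting between $\c_n[x]$ and $\c^F_k[x]$ up to polynomial factors; the non-flatness of order $\le\ell_{max}$ at the critical points enters here through the distortion of $f^m$ on a cylinder abutting a critical point, and is what generates the numerical exponents $4\ell_{max}^2$ and $2$ in the statement. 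Combining with the uniform Gibbs property of $\mu_F$,
\[
\mu(\c_n[x])\,|Df^n(x)|\ \asymp_{\mathrm{poly}(n)}\ \mu_F(\c^F_k[x])\,|DF^k(x)|\ \asymp_{\mathrm{poly}(n)}\ 1 ,
\]
and bookkeeping of the exponents yields the displayed chain in (a), with the intermediate quantity $|f^n(\c_n[x])|=|\c_n[x]|\,|Df^n(\xi)|$ ($\xi\in\c_n[x]$) handled by the same distortion estimate together with a lower bound on $\mu(\c_n[x])/|\c_n[x]|$ inherited (up to $n^{O(1)}$) from the second displayed comparison for $\mu_F$.

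\emph{Part (b).} Here $f$ has no critical points, and we work on the Hofbauer extension $\hat I$. By Keller \cite{kellholder}, $\phi\in\H$ has, on each transitive component, a unique equilibrium state $\mu$, coming from an eigenmeasure of the transfer operator on bounded variation functions on $\hat I$; since $\sup\phi-\inf\phi<h_{top}$ the operator has a spectral gap, whence an exponential tail $\hat\mu(\text{level}\ge\ell)\le C\theta^{\ell}$ for the lift $\hat\mu$ of $\mu$, and the lifted density $d\hat\mu/d\hat m_\phi$ is bounded above and below on each bounded-level part of $\hat I$, where $m_\phi$ is the conformal measure for $e^{-\phi+P(\phi)}$. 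Conformality gives $m_\phi(f^n(\c_n[x]))=\int_{\c_n[x]}e^{-S_n\phi+nP(\phi)}\,dm_\phi$. A \bcl argument as in (a) shows that for $\mu$-a.e.\ $x$ the Hofbauer level $\ell_n(x)$ of the canonical lift at time $n$ satisfies $\ell_n(x)\le C'\log n$ eventually; thus $\c_n[x]$ is seen on a level $\le C'\log n$, where both the density and the branch-length distortion are controlled up to a factor $n^{\pm\kappa}$. Feeding this into the conformality identity gives \eqref{eq:polGibbs} with $\kappa=\kappa(C',\phi)$.

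\emph{Main obstacle.} The crux is the transfer step: estimating $\mu(\c_n[x])$, $|Df^n(x)|$ and $|f^n(\c_n[x])|$ simultaneously with only polynomial error. This rests on the exponential tail of the $\delta$-extendible scheme (resp.\ of the Hofbauer extension)---without it the \bcl would only force the excess time / Hofbauer level below a \emph{power} of $n$, costing an exponential factor---and on a \emph{polynomial}, rather than bounded, distortion estimate for $f^m$ on cylinders touching a critical point of order $\le\ell_{max}$, via Koebe and $BC(2)$; the interplay of these two is exactly what pins down $\gamma>4\ell_{max}^2$ and $\gamma'>2$ in (a). A secondary technical point is that every estimate must be upgraded to hold $\mu$-a.e.\ with an $x$-dependent threshold $n_0(x)$, which is precisely what the \bcl delivers once the tails are summable against $\ell\sim\log n$.
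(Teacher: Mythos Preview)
Your approach to (a) has a genuine gap: you assume a $\delta$-extendible inducing scheme with exponential tail $\mu_F(\tau>k)\le C\theta^k$, but this is not available under the hypothesis $f\in NF_+^3$ alone, which only asks $\liminf_n|Df^n(f(c))|\ge L$. Exponential tails for the acip typically require a Collet--Eckmann type growth condition (cf.\ Corollary~\ref{cor:applications}(3) and Theorem~\ref{thm:fluc}(a), where extra growth is imposed); under $NF_+^3$ the tails can decay arbitrarily slowly, so your Borel--Cantelli bound $\tau(F^jx)\le C'\log j$ is unjustified and the transfer step may lose far more than a polynomial. Even granting exponential tails, the exponent you would obtain depends on the tail rate $\theta$, not on $\ell_{\max}$, so your explanation of why $\gamma>4\ell_{\max}^2$ arises (``distortion of $f^m$ near a critical point'') does not match the statement.

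The paper's route for (a) bypasses inducing entirely. The upper bound $\mu(\c_n[x])|Df^n(x)|\le n^{\gamma'}$ comes from two elementary Borel--Cantelli arguments with respect to Lebesgue measure: one for the set where $\mu(\c_n)>n^{\gamma'/2}m(\c_n)$ (using only that $\mu$ is a probability measure) and one for the set where $|Df^n|>n^{\gamma'/2}/m(\c_n)$ (using only $m(f^n(\c_n))\le 1$); this is the origin of $\gamma'>2$. The lower bounds rest on Proposition~\ref{prop:multiacip} from \cite{BR-LSS}, namely $m(f^{-n}(A))\le Cm(f(A))^{1/(2\ell_{\max})}$, which iterates to $\mu(A)\le Cm(A)^{1/(2\ell_{\max}^2)}$. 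This exponent $1/(2\ell_{\max}^2)$, together with the fact that there are $O(n)$ distinct images $f^n(\c_n)$, forces $\gamma>4\ell_{\max}^2$ for summability in the Borel--Cantelli step; Koebe is used only to compare $|Df^n(x)|$ with $|f^n(\c_n[x])|/|\c_n[x]|$.

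Ironically, your strategy for (a) is essentially the paper's proof of (b): there one \emph{does} have exponential tails for an inducing scheme (from \cite{BTeqgen}, which needs $f\in NF_+^3$ and not merely piecewise monotone), and the argument is exactly Gibbs for the induced potential $\Phi$, the bound $\tau(F^kx)\le\kappa\log k$ via Borel--Cantelli, and boundedness of $\phi$ to control $|S_{\tau^k-n}\phi|$. Your assertion that ``here $f$ has no critical points'' in (b) is simply wrong --- the hypothesis is still $f\in NF_+^3$ --- and it is the inducing scheme of \cite{BTeqgen}, not Keller's spectral gap on the Hofbauer tower, that supplies the exponential tail used in the proof.
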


Note that in part (b) we ask for stronger conditions on $f$ than we did in Proposition~\ref{prop:exp stats cyl}. Under these conditions, we proved in \cite{BTeqgen} that $\mu$ is compatible to an inducing scheme with `exponential tails', which allows us to prove the above result.
The precise exponent $\kappa$ of the polynomial Gibbs property in
condition (b) is given in the proof of Proposition~\ref{prop:poly Gibbs}.
This depends on the rate of decay of the tails for $\mu$.

The final results of this paper concern the normal fluctuation in the Ornstein-Weiss formula of return times.  The Ornstein-Weiss formula says in this context that the first return time to $\c_n[x]\in \P_n$ satisfies
\begin{equation}\label{eq:OW}
\lim_{n \to \infty} \frac1n \log r_{\c_n[x]}(x) = h_\mu
\mbox{ for } \mu\mbox{-a.e.} \ x.
\end{equation}
If $\mu$ is an invariant probability measure, then the variance
$\sigma_\mu^2$ of the process $\{ \phi \circ f^n\}_{n \ge 0}$ is defined by
$$
\sigma_\mu^2=\sigma_\mu(\phi)^2:=\int\phi^2~d\mu-
\left(\int\phi~d\mu\right)^2
+2\sum_{n=1}^\infty\left[\int\phi\circ f^n\cdot
\phi~d\mu -\left(\int\phi~d\mu\right)^2\right],
$$
where in case of the acip, $\phi:=-\log|Df|$.
We have  $\sigma_\mu>0$, except when $\phi$ is a coboundary, \ie
$\phi = \psi  \circ f - \psi$ for some measurable function $\psi$.
Potentials are unlikely to have zero variance.
For example for $\phi = -\log|Df|$ and $f(x) = ax(1-x)$, the only parameter for which $\phi$ is a coboundary is believed to be $a = 4$, cf. Corollary 3 in \cite{BHN}. This is a special case of the broader notion of Liv\v{s}ic regularity.

\begin{theorem}
Let $f \in NF_+^3$ and assume that one of the following conditions holds.
\begin{itemize}
\item[(a)]
For some $\beta>4\ell_{max}-3$,
\begin{equation}\label{eq:polynomial}
|Df^n(f(c))| \ge C n^\beta \quad \mbox{ for all } c \in \crit
\mbox{ and } n \ge 1,
\end{equation}
and $\mu$ is the acip.
\item[(b)] The potential $\phi\in \H$ and $\mu$ is the equilibrium state for $\phi$.
\end{itemize}
If $\sigma^2_\mu > 0$, then
\begin{equation*}
\mu\left(
x\in X:\frac{\log r_{\c_n(x)}(x)-nh_\mu}{\sigma_\mu\sqrt n}>u\right)
\to \frac1{\sqrt{2\pi}}\int_u^\infty e^{-\frac{x^2}2}~dx.
\label{eq:fluc}
\end{equation*}
\label{thm:fluc}
\end{theorem}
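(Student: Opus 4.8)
The plan is to convert the return time $r_{\c_n[x]}(x)$ into a Birkhoff sum of $\phi$, where $\phi=-\log|Df|$ in case (a), modulo errors that are negligible after division by $\sqrt n$, and then to invoke a central limit theorem for $\phi$ under $\mu$. Set $\Phi_n(x):=\big(\log r_{\c_n[x]}(x)-nh_\mu\big)/(\sigma_\mu\sqrt n)$ and $W_n(x):=\big(S_n\phi(x)-n\int\phi\,d\mu\big)/(\sigma_\mu\sqrt n)$. I will establish (i) $\Phi_n+W_n\to 0$ in $\mu$-measure, and (ii) $W_n$ converges in distribution to $\mathcal N(0,1)$. Since the standard normal law is symmetric and has a continuous distribution function, (i) and (ii) give $\Phi_n\to\mathcal N(0,1)$ in distribution by a Slutsky argument, and hence $\mu(\Phi_n>u)\to\frac1{\sqrt{2\pi}}\int_u^\infty e^{-x^2/2}\,dx$ for every $u$, which is the assertion.

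Towards (i), I first compare $\log r_{\c_n[x]}(x)$ with $-\log\mu(\c_n[x])$. The upper bound is elementary: by Kac's Lemma, $\int_C r_C(x)\,\mu(C)\,d\mu_C(x)=1$ for every $C\in\P_n$ with $\mu(C)>0$, so Markov's inequality, summed over $\P_n$, gives $\mu\big(r_{\c_n[x]}(x)\,\mu(\c_n[x])>T\big)\le 1/T$; taking $T=e^{\eps\sqrt n}$ shows that the positive part of $\frac1{\sqrt n}\big(\log r_{\c_n[x]}(x)+\log\mu(\c_n[x])\big)$ tends to $0$ in $\mu$-measure. The matching lower bound requires excluding anomalously short returns, and this is supplied by the exponential return time statistics of $(I,f,\mu)$ to \emph{cylinders}: by the arguments proving Theorem~\ref{thm:exp stats} and Proposition~\ref{prop:exp stats cyl}, which are legitimate for cylinders since Theorem~\ref{thm:induced same as original} holds for cylinders, $r_{\c_n[x]}(x)\,\mu(\c_n[x])$ converges in $\mu$-distribution to an exponential law, so in particular $\mu\big(r_{\c_n[x]}(x)\,\mu(\c_n[x])<e^{-\eps\sqrt n}\big)\to 0$. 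Hence $\frac1{\sqrt n}\big|\log r_{\c_n[x]}(x)+\log\mu(\c_n[x])\big|\to 0$ in $\mu$-measure.

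Next I replace $-\log\mu(\c_n[x])$ by $-S_n\phi(x)+nP(\phi)$. By the polynomial Gibbs property of $\mu$ (Theorem~\ref{thm:weak G}, which applies to the acip in case (a) because $f\in NF_+^3$, and to the equilibrium state in case (b) because $\phi\in\H$), for $\mu$-a.e.\ $x$ there are $n_0(x)$ and $\kappa>0$ with $\big|{-}\log\mu(\c_n[x])+S_n\phi(x)-nP(\phi)\big|\le\kappa\log n$ for all $n\ge n_0(x)$, so dividing by $\sqrt n$ this tends to $0$ a.e., hence in $\mu$-measure. Since $\mu$ is an equilibrium state for $\phi$ (in case (a), $\phi=-\log|Df|$, and an acip is its equilibrium state, see \cite{Led,Ruelle}), we have $h_\mu+\int\phi\,d\mu=P(\phi)$, i.e.\ $P(\phi)-h_\mu=\int\phi\,d\mu$. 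Combining the three estimates yields $\log r_{\c_n[x]}(x)-nh_\mu=-\big(S_n\phi(x)-n\int\phi\,d\mu\big)+E_n(x)$ with $E_n/\sqrt n\to 0$ in $\mu$-measure, which is exactly (i).

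It remains to prove the central limit theorem (ii), i.e.\ $\big(S_n\phi-n\int\phi\,d\mu\big)/\sqrt n\to\mathcal N(0,\sigma_\mu^2)$ with $\sigma_\mu^2>0$ the assumed variance; this is the analytic core and the main obstacle. In case (b), $\phi$ is bounded and Hölder and, by \cite{BTeqgen}, $\mu$ lifts to an inducing scheme with exponentially decaying tails of the inducing time $\tau$; the induced map is a full-branch Gibbs--Markov map with bounded distortion, and the induced observable $S_\tau\phi$ has exponential moments, so the CLT (with Green--Kubo variance, which one checks equals $\sigma_\mu^2$) follows from standard results for such towers, the hypothesis $\sigma_\mu^2>0$ ensuring non-degeneracy. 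In case (a) the potential $\phi=-\log|Df|$ is unbounded near $\crit$, and this is precisely why the quantitative condition \eqref{eq:polynomial} is imposed: via the constructions behind the existence of the acip in \cite{BR-LSS,BTeqnat} it yields an inducing scheme whose inducing time $\tau$ has polynomial tail $\mu_F(\tau>n)=O(n^{-q})$ with $q>2$ and for which $S_\tau\phi=-\log|Df^\tau|\in L^2(\mu_F)$, the threshold $\beta>4\ell_{max}-3$ being calibrated so that both hold simultaneously; granting this, (ii) follows from the central limit theorem for observables on Young towers with summable tails. Assembling the four steps completes the proof. The delicate points are the short-return control in (i) for these non-uniformly hyperbolic maps and, above all, the tail-and-integrability bookkeeping behind the induced CLT in case (a).
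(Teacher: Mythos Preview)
Your overall strategy---reduce the fluctuation problem to a CLT for $\phi$ via exponential return time statistics to cylinders (to pass from $\log r_{\c_n}$ to $-\log\mu(\c_n)$) and the polynomial Gibbs property (to pass from $-\log\mu(\c_n)$ to $-S_n\phi+nP(\phi)$)---is exactly the paper's route; the paper packages your step (i) as an application of Saussol \cite{Sau} rather than a handmade Kac--Markov plus Slutsky argument, but the content is the same.

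The substantive difference is in how the CLT itself is obtained. For case (b) the paper simply cites Keller's quasi-compactness result \cite[Theorem~3.3]{kellholder}, which gives the CLT directly for H\"older $\phi$ without any inducing; your route through \cite{BTeqgen}-towers works but is a detour. For case (a) the paper does \emph{not} use a Young-tower CLT: it applies Gordin's martingale-coboundary criterion, for which the essential tasks are (1) showing $\log|Df|\in L^2(\mu)$ (done in Lemma~\ref{lem:potential} via the bound $\mu(B_\eps(c))\lesssim \eps^{1/(2\ell_{max}^2)}$ from Proposition~\ref{prop:multiacip}), and (2) verifying that $\int|P^n(\tilde\phi\tilde h)|\,d\tilde m$ decays summably, which is where the threshold $\beta>4\ell_{max}-3$ actually enters---it comes from the decay-of-correlations estimates of \cite{BrLS,BR-LSS}, carried over from the unimodal argument on pages 91--93 of \cite{BrV}.

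This matters for your argument: you assert that $\beta>4\ell_{max}-3$ is ``calibrated'' to give simultaneously a tail $\mu_F(\tau>n)=O(n^{-q})$ with $q>2$ and $\log|DF|\in L^2(\mu_F)$ for some inducing scheme, and then grant it. But that threshold was not chosen with tower tails or induced moments in mind; it arose from the Gordin route. The tail exponents for inducing schemes under polynomial growth (cf.\ \cite{BTeqnat}) depend on $\beta$ and $\ell_{max}$ in a different way, and matching them to your two requirements is a non-trivial computation you have not carried out. So while a tower approach to the CLT in case (a) is in principle viable, as written it contains a genuine gap at precisely the point you yourself flag as delicate: you have not shown that hypothesis \eqref{eq:polynomial} with $\beta>4\ell_{max}-3$ actually yields the tower estimates you need.
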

For condition (b), see Paccaut \cite{Pac}, where a similar result is proved for another class of equilibrium states.

This paper is organised as follows. In Section~\ref{sec:lifting} we
give the basic definitions for interval maps, and we discuss the Hofbauer tower and its lifting properties.
Theorem~\ref{thm:ret map same ret stats} is proved in Section~\ref{sec:ind stats same}.
In Section~\ref{sec:exp stats} we focus on the exponential return
time statistics of acips and equilibrium states of potentials in $\H$.
Next, in Section~\ref{sec:W Gibbs} we present our results on the polynomial
Gibbs property.  The fluctuation results for the Ornstein-Weiss formula (Theorem~\ref{thm:fluc}) is given in Section~\ref{sec:fluc}.

Throughout calculations, $C$ will be a constant depending only on the map
$f$.

{\em Acknowledgement:} We would like to thank the referee for helpful comments.

\section{Lifting Measures to the Hofbauer Tower}
\label{sec:lifting}

The \emph{Hofbauer tower} (or \emph{canonical Markov extension}) is defined as
the quotient space
\[
\hat I := \left(\bigsqcup_{n \ge 0}\ \bigsqcup_{\c_n \in \P_n}
f^n(\c_n)\right) { /} \sim
\]
where $f^n(\c_n) \sim f^k(\c_k)$ if $f^n(\c_n) = f^k(\c_k)$.  We
denote the domains of $\hat I$ by $D=D(\c_n)$ and the collection of
all such domains by $\D$.
Points in $\hat I$ are written as $\hat x = (x,D)$, where
$D = D_{\hat x}$ is element of $\D$ containing $\hat x$.

We write $D \to D'$ for $D, D'\in\D$ if there exist $\c_n\in\P_n$
and $\c_{n+1}\in\P_{n+1}$ such that $\c_{n+1}\subset \c_n$, $D =
D(\c_n)$ and $D' = D(\c_{n+1})$. This gives $\D$ a graph
structure with domains $D$ as vertices.  For each $D=D(Z_n) \in \D$ has
at least one and at most $\,\#\P_1$ outgoing arrows.
The map $\hat f:\hat I \to \hat I$ is defined as
\[
\hat f(x,D) =  (f(x),D'),
\]
where $D' = f^{n+1}(\c_{n+1})$ for that particular element $\c_{n+1}
\in \P_{n+1}$ such that $x \in f^n(\c_{n+1})$ and $D \to D'$. Again
$\hat f(x,D)$ is uniquely defined for $x \not\in f^n(\bd
\c_{n+1})$; otherwise $\hat f$ is multivalued at $\hat x=(x,D)$. By
definition we have the following property: The system $(\hat I,
\hat f)$ is a Markov map with Markov partition $\D$. The \emph{
natural projection } $\pi:\hat I \to I$ is the (countable to
one) inclusion map from $\hat I$ to $I$, and
\[
\pi \circ \hat f = f \circ \pi.
\]
Let $i$ be the trivial bijection mapping (inclusion) $I$ to $\hat
I_0$ (note that $i^{-1}=\pi|_{\hat I_0}$) and let
$\hat\mu_0:=\mu\circ i^{-1}$ and
\begin{equation}
\hat{\mu}_n := \frac{1}{n}\sum_{k=0}^{n-1} \hat\mu_0
\circ\hat{f}^{-k}. \label{eqn:mulift}
\end{equation}
We wish to find some limit $\hat \mu$ of a subsequence of
$\{\hat\mu_n\}_n$.

Note that, as $\hat I$ is generally noncompact, the sequence $\{
\hat\mu_n \}$ may not have a convergent subsequence in the weak
topology. Instead we use the vague topology (see \eg \cite{bil}):
Given a topological space, a sequence of measures $\sigma_n$ is
said to converge to a measure $\sigma$ in the vague topology if
for any function $\phi \in \cont_0(\hat I)$ (where $\cont_0(\hat I)$ is the
set of continuous functions with compact support in $\hat I$), we have
$\lim\limits_{n \to \infty} \sigma_n(\phi) = \sigma(\phi)$.

A measure $\mu$ on $I$ is \emph{liftable} if a vague limit $\hat
\mu$ obtained in \eqref{eqn:mulift} is not identically $0$.
We define the \emph{Lyapunov exponent} of $\mu$ to be $\int\log|Df|~d\mu$.
In
the following theorem we provide assumptions which ensure
$\hat\mu\not\equiv 0$.

\begin{theorem}
Any ergodic invariant measure with positive Lyapunov exponent for
a $C^1$ interval map is liftable to a measure $\hat\mu$ where
$\hat\mu\circ\pi^{-1} = \mu$. \label{thm:lifting pos lyap}
\end{theorem}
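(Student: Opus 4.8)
The plan is to show that no mass escapes to infinity when one forms the Ces\`aro means \eqref{eqn:mulift} on the (non-compact) tower $\hat I$. Since $\pi\circ i=\mathrm{id}_I$ and $\pi\circ\hat f=f\circ\pi$ with $\mu$ $f$-invariant, we have $\pi_*\hat\mu_n=\mu$ for every $n$; moreover $\hat I$ is a countable union of compact intervals, hence locally compact and $\sigma$-compact, so $\{\hat\mu_n\}$ has a vague subsequential limit $\hat\mu$, which satisfies $\pi_*\hat\mu\le\mu$ by lower semicontinuity of mass on the open sets $\pi^{-1}(U)$, $U\subset I$. Hence it suffices to prove $\hat\mu(\hat I)=1$: then $\pi_*\hat\mu$ is a probability measure dominated by the probability measure $\mu$, so $\pi_*\hat\mu=\mu$, and in particular $\hat\mu\not\equiv0$. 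Writing $\hat I_{>m}$ for the union of the domains $D\in\D$ of level $>m$, and $\hat I_{\le m}:=\hat I\setminus\hat I_{>m}$ (a \emph{finite} union of subintervals of $I$, hence relatively compact in $\hat I$), the identity $\hat\mu(\hat I)=1$ follows once we establish the uniform tightness bound $\sup_{n}\hat\mu_n(\hat I_{>m})\to0$ as $m\to\infty$.

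To control $\hat\mu_n(\hat I_{>m})$ I would use the combinatorics of levels in the tower. Let $\ell_k(x)$ denote the level of $\hat f^k(i(x))$ (well defined for $\mu$-a.e.\ $x$, since $\hat f$ is multivalued only on a countable, $\mu$-null set when $\mu$ is non-atomic; the atomic case is trivial, as a repelling periodic orbit avoiding the turning points has an eventually periodic tower orbit of bounded level). Unravelling \eqref{eqn:mulift} gives
\begin{equation}\label{eq:pp-levelcount}
\hat\mu_n(\hat I_{>m})=\frac1n\sum_{k=0}^{n-1}\mu\bigl(\{x\in I:\ell_k(x)>m\}\bigr).
\end{equation}
The level sequence satisfies $\ell_0(x)=0$ and $\ell_{k+1}(x)\le\ell_k(x)+1$, and a point is reached at level $>m$ only after a straight climb out of $\hat I_{\le m}$: one checks that $\ell_k(x)=\ell$ forces $\hat f^k(i(x))=\hat f^\ell\bigl(i(f^{k-\ell}(x))\bigr)$, with this point at level exactly $\ell$, so that $f^{k-\ell}(x)\in G_\ell:=\{y\in I:\ell_j(y)=j\text{ for }0\le j\le\ell\}$. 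Introduce the \emph{first-drop time} $g(x):=\min\{n\ge1:\ell_n(x)<n\}\in\{1,2,\dots\}\cup\{\infty\}$, so that $\{g>\ell\}=G_\ell$; thus $g(x)$ measures how long the cylinders $\c_n[x]$ are carried to ``fresh'' domains before one of them opens up onto a full branch. Charging each $k$ with $\ell_k(x)>m$ to $j(k):=k-\ell_k(x)$, and using that $f^{j(k)}(x)\in G_{\ell_k(x)}$ and that at most $g(f^{j}(x))$ values of $k$ share a given $j$, one obtains the pointwise bound $\sum_{k=0}^{n-1}\mathbf 1_{\{\ell_k(x)>m\}}\le\sum_{j=0}^{n-1}g(f^j(x))\,\mathbf 1_{\{g(f^j(x))>m\}}$; integrating against the $f$-invariant measure $\mu$ and comparing with \eqref{eq:pp-levelcount} yields
\begin{equation}\label{eq:pp-master}
\sup_{n}\ \hat\mu_n(\hat I_{>m})\ \le\ \int_{\{g>m\}}g\ d\mu .
\end{equation}

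Thus the whole theorem reduces to proving $g\in L^1(\mu)$: then the right-hand side of \eqref{eq:pp-master} tends to $0$ by dominated convergence, and we are done. This integrability --- equivalently, summability of $\mu(G_n)=\mu(g>n)$ --- is where the Lyapunov hypothesis must be used, and it is the heart of the matter; it is essentially Keller's key estimate from \cite{kellift}. The idea is that $G_n$ consists of points whose cylinders $\c_j[x]$, $j\le n$, all fail to open up onto a full branch, which is a failure of expansion along the orbit of $x$; since $\log|Df|\in L^1(\mu)$ has \emph{positive} mean $\lambda(\mu)$, such a failure cannot persist, forcing $\mu(G_n)$ to decay and $\int g\,d\mu=\sum_{n\ge0}\mu(G_n)<\infty$.

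The main obstacle is precisely this last step, and the difficulty is that under the sole hypothesis $f\in C^1$ there is no Koebe-type distortion control on cylinders, so one cannot simply pass from ``$f^j(\c_j[x])$ is short'' to ``$|Df^j|$ is small on $\c_j[x]$''. One must instead argue at the level of the partition --- controlling the Birkhoff sums of $\log|Df|$ over the relevant orbit segments directly, or via a Borel--Cantelli argument --- in order to extract the required decay of $\mu(G_n)$ from $\lambda(\mu)>0$. The remaining ingredients ($\pi_*\hat\mu_n=\mu$, the relative compactness of $\hat I_{\le m}$, the combinatorial bookkeeping leading to \eqref{eq:pp-master}, and the deduction of $\hat\mu(\hat I)=1$ from the uniform tightness bound) are routine.
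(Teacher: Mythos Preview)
The paper does not give a proof; it simply refers to Keller \cite{kellift} (with \cite{BrK} as a companion reference). Your proposal is a sketch of exactly Keller's argument: reduce liftability to uniform tightness of the Ces\`aro means \eqref{eqn:mulift} on the tower, rewrite tightness via the level process $\ell_k$, and bound it by a tail integral of a first-drop time $g$. You then explicitly defer the decisive step --- that positive Lyapunov exponent forces $g\in L^1(\mu)$ --- back to \cite{kellift}. So your proposal and the paper's proof amount to the same thing: both rest on Keller's lemma. You have supplied a more detailed roadmap of the reduction, which is accurate in spirit, but you have not closed the gap you yourself identify; your final paragraph describes the difficulty rather than resolves it, so as it stands this is a sketch that cites the same source the paper cites, not an independent proof.

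One technical caution on the bookkeeping: the assertion that $\ell_k(x)=\ell$ forces $f^{k-\ell}(x)\in G_\ell$ (i.e., that the lift of $f^{k-\ell}(x)$ from the base climbs straight to level $\ell$ with no intermediate drop) is the right statement, but ``one checks'' understates it. Drops in the tower need not land at level $0$, so the backward accounting of which $j=k-\ell_k(x)$ a given $k$ is charged to, and the bound ``at most $g(f^j(x))$ values of $k$ share a given $j$'', require the specific Markov structure of the Hofbauer extension --- namely that the level of a domain is intrinsic to it and that the length-$\ell$ ancestry of a level-$\ell$ domain in the graph $(\D,\to)$ genuinely corresponds to a drop-free orbit segment from the base. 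This is handled in \cite{kellift}; it is not a flaw in your strategy, but it is where the real work in the reduction lies, and it deserves more than a parenthetical.
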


\begin{proof}
For the proof of this see \cite{kellift}, see also \cite{BrK}.
\end{proof}

\section{Return Statistics via the Hofbauer tower}
\label{sec:ind stats same}

In \cite{BSTV} it was shown that dynamical systems $(X,f,\mu)$ and
first return maps $(Y,F,\mu_Y)$ to fixed subsets $Y \subset X$
have the same return time statistics. If $(Y,F)$ is hyperbolic, then it
is commonly expected (and in many cases proved) that return
time statistics will be exponential for balls, or at least for cylinder
sets. However, typically no hyperbolic return maps can be found on
sets with $\mu(Y) > 0$. The idea from \cite{BrV}, which we will
extend here, is that there frequently are sets $Y$ with induced
(rather than first return) maps $F$ such that $Y$ can be lifted to
a set $\hat Y \subset \pi^{-1}(Y)$ in the Hofbauer tower such
that $F$ lifts to a first return map on $\hat Y$. As the set $Y$
decreases in size, $F$ will be closer to the true first return
map, and hence we can approximate the return time statistics on the
original system.  That is, we prove Theorem~\ref{thm:induced same
as original}.

We first explain the inducing schemes we consider.
An inducing scheme $(Y,F,\tau)$ for $Y \subset I$ is a
generalisation of a first return map.
It consists of a collection $\{ Y_i \}_i$ such that
$F|_{Y_i} = f^{\tau_i}|_{Y_i}:Y_i \to Y$ is monotone onto for
some $\tau_i \in \{1,2,\dots\}$. The function $\tau:\cup_i Y_i \to \N$
with $\tau(x) = \tau_i$ if $x \in Y_i$ is called the \emph{inducing time}.
It is well-known that if $\mu_F$ is an $F$-invariant measure, with
\begin{equation*}
\Lambda := \sum_i \tau_i \, \mu_F(Y_i) < \infty,
\end{equation*}
then $\mu$ defined by
\begin{equation}\label{eq:muA}
\mu(A) = \frac1{\Lambda} \sum_i \sum_{k=0}^{\tau_i-1}
\mu_F(f^{-k}(A) \cap Y_i)
\end{equation}
is $f$-invariant.

We next explain the relation between a first extendible return map and a first return map on the Hofbauer tower.  We fix $\delta>0$ and let $z$ be a typical point of $\mu$.  Let $J_n:=\c_n[z]$ and $I_n$ be a $\delta$-neighbourhood of $J_n$.
Let $U$ be any open interval such that $J_n \supset U \owns z$.
Let $\hat R_U:\pi^{-1}(U) \to \pi^{-1}(U)$ be the first return map to $\pi^{-1}(U)$ by $\hat f$, and denote the return time function by $r_U$.
Define $\hat I_n \subset \pi^{-1}(I_n)$ to be the maximal set \st
$\hat I_n \cap D \neq \es$ for $D \in \D$ implies that
$\pi^{-1}(I_n)\cap D$ is compactly contained in $D$. Now let
$\hat J_n:=\pi^{-1}(J_n)\cap \hat I_n$ and denote the first return map
by $\hat f$ to $\hat J_n$ by $R_{\hat J_n}$.  Note that $R_{\hat J_n}$ is extendible to $\hat I_n$.
Define $\tilde F_n(y) := \pi \circ R_{\hat J_n} \circ \pi|_{\hat
J_n}^{-1}(y)$.   \cite{BrCMP} implies that $\tilde F_n$ is well defined.  As in the introduction, we consider $\tau_{J_n}=\tau_{J_n,\delta}$.

\begin{lemma}
For the first return time $r_{\hat J_n}$ to $\hat J_n$ we have
$\tau_{J_n} = r_{\hat J_n}\circ\pi_{\hat J_n}^{-1}$.
\label{lem:ret induced same}
\end{lemma}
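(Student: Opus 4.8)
The plan is to prove the identity orbit by orbit: for each $\hat y\in\hat J_n$ with $y:=\pi(\hat y)$, I would show that the first $\hat f$-return time of $\hat y$ to $\hat J_n$ equals the first $\delta$-extendible return time $\tau_{J_n}(y)$; since this holds for every point in the $\pi$-fibre over $y$, the asserted equality $\tau_{J_n}=r_{\hat J_n}\circ\pi_{\hat J_n}^{-1}$ follows. Write $\hat y=(y,D)$; because $\hat J_n=\pi^{-1}(J_n)\cap\hat I_n$, the domain $D$ compactly contains the copy of $I_n$ sitting in it. The one structural input I would use is the ``chain rule'' for the canonical Markov extension (see \cite{kellift,BrCMP}): for every $k\ge 0$ one has $\hat f^k(\hat y)=(f^k(y),D_k)$ with $D_k=f^k(D\cap\c_k[y])$, and $f^k$ restricts to a homeomorphism of $D\cap\c_k[y]$ onto $D_k$. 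Consequently $\hat f^k(\hat y)\in\hat J_n$ precisely when $f^k(y)\in J_n$ and $I_n\Subset D_k$.

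The inequality $r_{\hat J_n}(\hat y)\ge\tau_{J_n}(y)$ is the easy half: if $\hat f^k(\hat y)\in\hat J_n$, then $f^k(y)\in J_n$ and $I_n\Subset D_k$, so pulling $I_n$ back through the homeomorphism $f^k|_{D\cap\c_k[y]}$ produces an interval $H\ni y$ with $f^k|_H:H\to I_n$ a homeomorphism; thus $k$ is admissible in the minimum defining $\tau_{J_n}(y)$, and taking $k=r_{\hat J_n}(\hat y)$ gives the bound.

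The reverse inequality is the heart of the matter, and I expect it to be the main obstacle. Put $i:=\tau_{J_n}(y)$ and let $H\ni y$ realise it, i.e.\ $f^i|_H:H\to I_n$ is a homeomorphism and $f^i(y)\in J_n$; I want $\hat f^i(\hat y)\in\hat J_n$, i.e.\ $I_n\Subset D_i=f^i(D\cap\c_i[y])$. Since a homeomorphic branch of $f^i$ through $y$ cannot cross $\partial\c_i[y]$, we have $H\subseteq\c_i[y]$; the real content is that $H$ in fact sits well inside $D\cap\c_i[y]$, so that its image $I_n$ sits compactly inside $D_i$. This is where I would use both that $\hat y\in\hat J_n$ forces $I_n\Subset D$ and the maximality built into the definition of the Hofbauer domains: the domains of $\hat I$ are exactly the maximal intervals carried by monotone branches, so the branch of $f^i$ realising the first $\delta$-extendible return, already a homeomorphism over all of $I_n$, extends over $D\cap\c_i[y]$ (whose $f^i$-image is $D_i$), while the slack of size $\sim\delta|J_n|$ around $J_n$ inside $I_n$ together with $I_n\Subset D$ upgrades $I_n\subseteq D_i$ to a compact containment. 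The cleanest bookkeeping for this matching is to track, along $k=1,2,\dots$, the pair (itinerary of $f^k(y)$, maximal interval on which the relevant branch of $f^k$ is a homeomorphism), observe that this pair is exactly what the domain $D_k$ records, and conclude that the first index at which the branch covers $I_n$ with slack on both sides is the first return of $\hat y$ to $\hat J_n$. The genuinely delicate point is the degenerate configuration in which the branch covers $I_n$ but one endpoint of $\c_i[y]$ maps onto $\partial I_n$ (so there is no slack): this requires $\partial I_n$ to meet the forward orbit of $\crit\cup\partial I$, a countable constraint, and I would rule it out (as the construction permits) by choosing the scaling $\delta$, hence $I_n$, outside these countably many bad values; the countably many boundary points of cylinders and domains where $\hat f$ is multivalued are excluded in the usual way. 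Combining the two inequalities yields $r_{\hat J_n}(\hat y)=\tau_{J_n}(\pi(\hat y))$ for all $\hat y\in\hat J_n$, which is the statement.
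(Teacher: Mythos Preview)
The paper's own proof is a one-line citation to \cite[Lemma~2]{BrCMP}, so your direct orbit-by-orbit argument is not what the paper does; it is an attempt to reprove the cited lemma. Your ``easy'' inequality $r_{\hat J_n}(\hat y)\ge\tau_{J_n}(y)$ is fine: if $I_n\Subset D_k=f^k(D\cap\c_k[y])$, pulling $I_n$ back through the homeomorphism $f^k|_{D\cap\c_k[y]}$ produces the required extension $H$.

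There is, however, a real gap in your reverse inequality. For $\hat y=(y,D)$ you must show $I_n\Subset D_i$, where $D_i=f^i(D\cap\c_i[y])$; since $f^i|_{\c_i[y]}$ is a homeomorphism and $H\subset\c_i[y]$, this is equivalent to showing $H\subset D$. You do not establish this. The sentence ``observe that this pair is exactly what the domain $D_k$ records'' is where the argument slips: the maximal interval on which the branch of $f^k$ through $y$ is a homeomorphism is $f^k(\c_k[y])$, whereas the Hofbauer domain carrying $\hat f^k(\hat y)$ is $D_k=f^k(D\cap\c_k[y])$. These agree only when $\c_k[y]\subset D$, which is automatic if $D=I$ is the base level but not for a general domain $D$ with $I_n\Subset D$. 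The size of $H$ is governed by $|Df^i|$ along the branch, not by $|I_n|$ or $|D|$, so nothing you have written prevents $H$ from protruding past $\partial D$; in that case $f^i(\partial D\cap H)$ lands in the interior of $I_n$ and becomes a boundary point of $D_i$, so $I_n\not\Subset D_i$.

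Correspondingly, the ``genuinely delicate point'' you flag is the wrong one. You worry about an endpoint of $\c_i[y]$ mapping onto $\partial I_n$, and propose to exclude it by perturbing $\delta$ away from countably many post-critical values. But the obstruction above concerns an endpoint of $D$ lying in the interior of $H$; this is a constraint on $D$ (which ranges over all Hofbauer domains containing $I_n$), not on $\delta$, and your countability argument does not touch it. To close the gap you need the content of \cite[Lemma~2]{BrCMP}, which controls precisely this dependence on the starting domain.
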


This implies that $\tilde F_n$ is the same as $F_n$ defined by \eqref{eq:Fn}.

\begin{proof} This was shown in \cite[Lemma 2]{BrCMP},
\end{proof}

We say that $r_U$ is \emph{$(n,\delta)$-extendible at $x$} if $f^{r_U(x)}$ can be extended homeomorphically locally around $x$ to $I_n$.

\begin{lemma}
For any $z$ as above we have
$$\lim_{n \to \infty}\sup_{z \in U\subset J_n}
\mu_U\left( x \in U: r_U(x) \hbox{ is not } (n,\delta)\hbox{-extendible at } x\right)
=0,$$
\label{lem:extendible}
where the supremum is taken over intervals $U$.
\end{lemma}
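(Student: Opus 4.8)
The plan is to reduce the statement to a genuine first-return problem on the Hofbauer tower, where the available combinatorics of the domains $\D$ do the work. Fix $z$ a typical point of $\mu$ with positive Lyapunov exponent, so that by Theorem~\ref{thm:lifting pos lyap} the lifted measure $\hat\mu$ is a nonzero $\hat f$-invariant measure with $\hat\mu\circ\pi^{-1}=\mu$; after normalising we may assume $\hat\mu$ is a probability measure, and since $z$ is typical we may choose a lift $\hat z=(z,D_0)$ lying in the (ergodic component of the) support of $\hat\mu$. Recall $J_n=\c_n[z]$, $I_n$ a $\delta$-neighbourhood of $J_n$, and $\hat I_n$, $\hat J_n$ as constructed just before the lemma, so that, by Lemma~\ref{lem:ret induced same}, $r_{\hat J_n}=\tau_{J_n}\circ\pi_{\hat J_n}$, i.e.\ the first return to $\hat J_n$ under $\hat f$ projects exactly to the first $\delta$-extendible return to $J_n$. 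The first observation is that for $x\in U\subset J_n$, failure of $r_U(x)$ to be $(n,\delta)$-extendible at $x$ forces the $\hat f$-orbit of the lift $\hat x$ (in $\hat J_n$) to make its first return to $\pi^{-1}(U)$ through a domain $D\in\D$ which $\pi^{-1}(I_n)$ does \emph{not} enter compactly — because as long as the orbit stays in domains on which $\pi^{-1}(I_n)$ is compactly contained, the branch of $f^{r_U(x)}$ extends homeomorphically to $I_n$ by Koebe/the construction of $\hat I_n$. Equivalently, the $\hat f$-orbit of $\hat x$ must leave the trap region $\hat I_n$ before returning to $\pi^{-1}(U)$; write $E_n\subset\hat J_n$ for the set of such bad lifts.

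The core of the argument is then to estimate $\hat\mu_{\hat J_n}(E_n)$, or rather the projected conditional measure $\mu_U$ of the bad set, uniformly in $U$ with $z\in U\subset J_n$. The key point is that $E_n$ is contained in the set of points of $\hat J_n$ whose first return to $\hat J_n$ does \emph{not} coincide with their first return to $\hat I_n$, i.e.\ points that exit $\hat I_n$ and come back; call this set $\hat E_n$, which does not depend on $U$. One shows $\hat E_n$ has $\hat\mu$-measure tending to $0$: indeed $\hat I_n\supset\hat J_n$ and, as $n\to\infty$, $\hat J_n$ shrinks to the single point $\hat z$ (using $\sup_x|\c_n[x]|\to0$ for maps in $NF^2$, and that $\hat I_n$ is a $\delta$-neighbourhood construction that likewise collapses), so by Poincar\'e recurrence and the fact that a.e.\ lifted orbit returns to a fixed neighbourhood of $\hat z$ contained in $\hat I_n$ infinitely often, the proportion of the return-to-$\hat J_n$ mass that is forced to leave the larger set $\hat I_n$ first goes to zero. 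More carefully: by Kac and the first-return decomposition, $\hat\mu(\hat E_n)\le \hat\mu(\hat I_n\setminus\hat J_n)+\hat\mu(\{\,x:\text{orbit exits }\hat I_n\text{ before returning to }\hat I_n\,\})$-type bounds, and the standard "induced-map tail" estimate, together with $\hat\mu(\hat I_n)\to\hat\mu(\{\hat z\})=0$ for nonatomic $\mu$ (atomic case being trivial, periodic $z$ excluded), gives $\hat\mu(\hat E_n)\to0$.

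Passing back down: for each $n$ and each admissible $U$ we have $E_n\subset\pi_{\hat J_n}^{-1}(\text{bad set in }U)$ together with the inclusion of the bad set into $\pi(\hat E_n)$, so
\[
\mu_U\big(x\in U: r_U(x)\text{ not }(n,\delta)\text{-extendible}\big)
\;\le\;\frac{\mu\big(\pi(\hat E_n)\cap U\big)}{\mu(U)}.
\]
The obstacle — and the step needing the most care — is exactly this last division by $\mu(U)$: $\mu(U)$ may be much smaller than $\mu(J_n)$, so a crude bound $\mu(\pi(\hat E_n))/\mu(U)$ is useless. The fix is to work on the Hofbauer tower \emph{before} projecting and to use that $\hat f$ restricted to the return dynamics on $\hat I_n$ has bounded distortion (Koebe, via $f\in NF^3$ and the $\delta$-extendibility built into $\hat I_n$), so that the conditional probability of the bad event is comparable whether we condition on $\pi^{-1}(U)$ or on $\hat J_n$; concretely, the branches realising returns to $\pi^{-1}(U)$ are obtained from branches realising returns to $\hat J_n$ by the inverse of a uniformly-distortion-bounded map, hence $\hat\mu_{\hat J_n}$-measure and $\hat\mu_{\pi^{-1}(U)}$-measure of the bad set differ by a bounded factor uniformly in $U$. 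Therefore the bad conditional probability is bounded by $C\,\hat\mu_{\hat J_n}(\hat E_n)$ for a constant $C$ depending only on $f$ and $\delta$, and we already argued this tends to $0$. Taking the supremum over $U$ and then $n\to\infty$ completes the proof.
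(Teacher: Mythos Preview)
Your reduction to the Hofbauer tower is the right starting point, but the argument has a genuine gap in the final step, where you invoke bounded distortion to pass from conditional measure on $\hat J_n$ to conditional measure on $\pi^{-1}(U)$.

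After using $\hat R_U$-invariance of $\hat\mu$ (which is what your ``first observation'' amounts to, once cleaned up), the bad conditional probability equals $\hat\mu(\pi^{-1}(U)\setminus\hat U_n)/\mu(U)$: the fraction of the $\hat\mu$-mass of $\pi^{-1}(U)$ lying in domains $D$ that do \emph{not} compactly contain $I_n$. Koebe distortion controls the Jacobian of individual branches of the return map, but it says nothing about how the lifted measure $\hat\mu$ distributes across the domains $D\in\D$. That distribution is encoded in the Radon--Nikodym derivatives $\rho_D(x)=\frac{d(\hat\mu\circ\pi|_D^{-1})}{d\mu}(x)$, which are merely $L^1(\mu)$ functions with $\sum_D\rho_D\equiv1$; for a general liftable $\mu$ (no absolute continuity or bounded density is assumed here) there is no Koebe-type mechanism making $\hat\mu_{\pi^{-1}(U)}$ and $\hat\mu_{\hat J_n}$ comparable uniformly in $U$. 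Your claim that the two conditional measures ``differ by a bounded factor uniformly in $U$'' is therefore unsupported. Note also that your estimate $\hat\mu(\hat E_n)\to0$ is, without normalisation, just recording that $\hat\mu(\hat J_n)\to0$ (since $\hat E_n\subset\hat J_n$ and $J_n\downarrow\{z\}$ with $\mu$ nonatomic), so the Kac/Poincar\'e recurrence paragraph is not doing any real work.

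The paper handles the normalisation by a pointwise density argument at $z$: fix $\eps>0$, choose a finite subcollection $\D'\subset\D$ with $\sum_{D\in\D'}\rho_D(z)\ge1-\eps$, and verify that for every $D\in\D'$ and all large $n$ one has (i) $\pi^{-1}(I_n)\cap D$ compactly contained in $D$, and (ii) $\hat\mu(\pi^{-1}(U)\cap D)/\mu(U)\ge(1-\eps)\rho_D(z)$ for every interval $U$ with $z\in U\subset J_n$. Condition (ii) is a $\mu$-density-point statement for the level sets of $\rho_D$ (slice the range of $\rho_D$ into pieces of size $\eps/2$ and use that $z$ is a $\mu$-density point of its slice), not a distortion estimate. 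Together these give $\hat\mu(\hat U_n)/\mu(U)\ge(1-\eps)^2$ uniformly in $U$, hence the bad fraction is at most $2\eps$. This density-point step is the missing idea in your proposal.
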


\begin{proof}
Let $\hat U_n:=\pi^{-1}(U)\cap \hat I_n$. By Theorem~\ref{thm:lifting pos lyap}, the construction of
$\hat R_U$ and Lemma~\ref{lem:ret induced same}, we have
\[
\mu\left(z \in U: r_U \hbox{ is not } (n,\delta)\hbox{-extendible}\right) =
\hat\mu\left(\hat R_U^{-1}(\pi^{-1}(U) \cap \hat U_n)\right) =
\hat\mu(\pi^{-1}(U) \sm \hat U_n)
\]
by the $\hat R_U$-invariance of $\hat\mu$.  To prove our lemma we
must estimate this quantity relative to $\mu(U)$.

Fix  $0<\eps<1$, and let $D \in \D$ be any domain in the Hofbauer tower. Let $\rho_D(z)$ be given by
$\frac{d\hat\mu\circ \pi|_{D}^{-1}}{d\mu } (z)$; obviously
$\rho_D(z) = 0$ if $z \notin D$. Recalling from
Theorem~\ref{thm:lifting pos lyap} that $\mu =\hat\mu \circ
\pi^{-1}$, we have that $\sum_{D \in \D}\rho_D(z)=1$ for
$\mu$-a.e. $z$. Clearly, there exists some finite subcollection
$\D'$ of $\D$ \st $\sum_{D \in \D'} \rho_D(z) \ge (1-\eps)$.
For each $D$ we say that $(\ast)_D$ holds for $n$ if
\begin{enumerate}
\item  $\pi^{-1}(I_{n}) \cap D$ compactly contained
in $D$; and
\item for any $U\subset J_{n}$, $\frac{\hat\mu(\pi^{-1}(U)
\cap D)}{\mu(U)} \ge (1-\eps)\rho_D(z). $
\end{enumerate}
The first condition trivially holds for any large $n$.  We claim
that the second condition holds for a.e. $z$, when $n$ is
sufficiently large.  To prove this claim, note that we have $0\le
\rho_D\le 1$.  We divide $[0,1]$ into pieces $\{\eta_i\}_i$ of
size $\frac\eps2$.  Choose $\beta_i:=\rho_D^{-1}(\eta_i)$ so that  $z$ is a density point of $\beta_i$.  Note that for $y\in \beta_i$,
$|\rho_D(y)-\rho_D(z)|\le \frac\eps2$.  Then we claim that for
$U\ni z$ a small enough neighbourhood of $z$, we have $$\frac{\hat\mu\circ\pi|_{D}^{-1}(U)}{\mu(U)}
\ge (1-\eps)\rho_D(z).$$

To prove the claim, we have \begin{align*}
\frac{\hat\mu\circ \pi|_{D}^{-1}(U)}{\mu(U)} & =
\frac1{\mu(U)}\int_U\rho_D~d\mu =
\frac1{\mu(U)}\left(\int_{U\cap\beta_i}\rho_D~d\mu+
\int_{U\sm\beta_i}\rho_D~d\mu\right)\\
& \ge \frac{\mu(U\cap\beta_i)}{\mu(U)}
\left(1-\frac\eps2\right)\rho_D(z) - \frac{\mu(U\sm\beta_i)}{\mu(U)}.
\end{align*}  Since $z$ is a density point of $\beta_i$, we have
$$\frac{\mu(U\cap\beta_i)}{\mu(U)} \to 1 \hbox{ and }
\frac{\mu(U\sm\beta_i)}{\mu(U)} \to 0$$ as $U\to z$.  Thus for
large enough $n$, the second condition must hold for $z$.

There exists $N$ \st $(\ast)_D$ holds for all $n \ge N$ and
$D\in \D'$. Therefore, if $n \ge N$ then
\begin{align*}
\frac{\hat\mu(\pi^{-1}(U) \cap \hat U_{n})}{\mu(U)} &= \sum_{D \in
\D} \frac{\hat\mu(\pi^{-1}(U) \cap \hat U_{n} \cap
D)}{\mu(U)} \\
&= \sum_{D \in \D'} \frac{\hat\mu(\pi^{-1}(U) \cap \hat U_{n} \cap
D)}{\mu(U)} + \sum_{D \in \D \setminus \D'}
\frac{\hat\mu(\pi^{-1}(U)
\cap \hat U_{n}\cap D)}{\mu(U)}\\
&\ge (1-\eps)\sum_{D \in \D'}\rho_D(z) \ge (1-\eps)^2.
\end{align*}
Therefore, for all $n\ge N$,
\[
\frac{\hat\mu(\pi^{-1}(U) \sm \hat U_n)}{\mu(U)} \le 1-(1-\eps)^2
< 2\eps.
\]
As $\eps>0$ can be taken arbitrarily small, the proof is
complete.
\end{proof}

\begin{proof}[Proof of Theorem~\ref{thm:induced same as original}]
Let $\alpha_n = \sup_{z \in U \subset J_n}
\frac{\hmu(\hU_n)}{\mu(U)}$. As we have seen in
Lemma~\ref{lem:extendible}, $\lim_{n\to \infty} \alpha_n = 1$. Because
$f \circ \pi = \pi \circ \hf$ we have
\[
\mu_U\left( y\ : \ r_U(y) > \frac{t}{\mu(U)}\right) =
\hat\mu_{ \pi^{-1}(U) } \left( \hy \ : \
r_{\pi^{-1}(U)}(\hy) > \frac{t}{\mu(U)} \right).
\]
The right hand side is majorised by a sum of three terms:
\begin{eqnarray*}
\mbox{r.h.s.} &\le&
\hmu_{\pi^{-1}(U)}( \pi^{-1}(U) \setminus \hat U_n) \\
&& + \ \hat\mu_{\pi^{-1}(U)}\left( \hat y \in \hat U_n \ : \
r_{\hat U_n}(\hat y)> \frac{t}{\mu(U)}\right) \\
&& + \ \hat\mu_{\pi^{-1}(U)}\left( \hat y \in \hat U_n \ : \
r_{\hat U_n}(\hy) > r_{\pi^{-1}(U)}(\hat y) \right)  \\
&=& I + II + III.
\end{eqnarray*}
We have the estimates
\[
I = \frac{ \hat\mu(\pi^{-1}(U) \setminus \hat U_n) }{ \hat\mu(\pi^{-1}(U))
} = \frac{ \hat\mu(\pi^{-1}(U) \setminus \hat U_n) }{ \mu(U) } \le
1-\alpha_n \to 0.
\]
Next
\[
II \le \alpha_n \hat\mu_{\hat U_n} \left( \hat y \ : \
r_{\hat U_n}(\hat y)
> \frac{t}{\mu(U)} \right)  = \alpha_n \hat\mu_{\hat U_n}
\left( \hat y \ : \ r_{\hat U_n}(\hat y) > \frac{\tilde
t}{\hat\mu(\hat U_n)} \right)
\]
for $\tilde t = t \alpha_n$. Theorem~\ref{thm:ret map same ret
stats} says that the return time statistics of a first return map coincides with the return time statistics of the original system. In this case, it means that the system $(\hat I,\hat f, \hat\mu)$ has the same return time statistics on $\hat U_n$ as the induced system $(\hat J_n, \hat F_n, \hat\mu_{\hat J_n})$.  By Lemma~\ref{lem:extendible}, tends to the same return time statistics as $(J_n,F_n,\mu_{F_n})$. Hence $II$ tends  to $\alpha_n G(\tilde t)$ as $\mu(U) \to 0$, and then, by continuity of $G$, to $G(t)$ as $n \to \infty$. The third term
\begin{eqnarray*}
III &=& \hat\mu_{\pi^{-1}(U)}\left[\hat R_U^{-1}(\pi^{-1}(U) \setminus
\hat U_n)\cap \hat U_n\right] \\
&\le& \hat\mu_{\pi^{-1}(U)}(\pi^{-1}(U) \setminus \hat U_n) = I \to 0,
\end{eqnarray*}
as $n \to \infty$. This gives the required upper bound for
$\mu_U(\{ y\ : \ r_U(y) > \frac{t}{\mu(U)} \})$. Now for the lower
bound
\begin{eqnarray*}
\mbox{r.h.s.} &\ge& \hat\mu_{\pi^{-1}(U)}\left( \hat y \in \hat U_n \ : \ r_{\hat U_n}(\hat y) > \frac{t}{\mu(U)}\right)
\\
&& - \hat\mu_{\pi^{-1}(U)}\left( \hat y \in \hat U_n \ : \ r_{\hat U_n}(\hat y) >
r_{\pi^{-1}(U)}(\hat y) \right)  \\
&=& II - III.
\end{eqnarray*}
The above arguments show that this also tends to $G(t)$ as
$\mu(U) \to 0$ and $n \to \infty$. This finishes the proof.
\end{proof}

\section{Exponential return time statistics}
\label{sec:exp stats}

\begin{defn}[Rychlik map]\label{def:Rychlik}
Let $F:\cup_{i\in \N} Y_i \to Y$ be continuous on each $Y_i$,
with $m(\cup_i Y_i)=m(Y) = 1$
for a given reference measure $m$. We call $F$ a \emph{Rychlik
map}, see  \cite{Rychlik}, if:
\begin{enumerate}
\item
there exists a neighbourhood $Z \supset Y$ and for each $i$ a neighbourhood $Z_i \supset Y_i$ such that $F|_{Y_i}$ can be extended to a homeomorphism
between intervals:
$F:Z_i \stackrel{\mbox{\tiny onto}}{\longrightarrow} F(Z_i)$

\item
there exists a function $\Phi:Y\to [-\infty,\infty)$, with ${\rm Var}\
e^\Phi<+\infty$, $\Phi=-\infty$ on $Y \setminus \cup_i Y_i$,
such that the operator $\TF:L^1(m)\to L^1(m)$ defined by
\[
\TF \psi(x)=\sum_{y\in F^{-1}(x)} e^{\Phi(y)} \psi(y)
\]
preserves $m$. In other words, $m(\TF \psi)=m(\psi)$ for each $\psi \in
L^1(m)$ (or equivalently: $m$ is $\Phi$-conformal);
\item
$F$ is expanding: $\displaystyle \sup_{x\in X} \Phi(x) < 0$.
\end{enumerate}
\end{defn}

The following is Theorem 3.2 of \cite{BSTV}.  It also applies to cylinders.

\begin{theorem}
Suppose $(Y,F)$ is a Rychlik map with conformal measure $m$ and
invariant mixing measure $\mu \ll m$.  Then $(Y,F)$ has
exponential return time statistics to balls. \label{thm:exp
for R}
\end{theorem}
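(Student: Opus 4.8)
The plan is to read the exponential law off the spectral picture of the transfer (Perron--Frobenius) operator $\TF$ of the Rychlik map $(Y,F)$, rather than merely to quote \cite{BSTV}. First I would invoke Rychlik's theorem \cite{Rychlik}: since $\mathrm{Var}\,e^{\Phi}<\infty$ and $\sup\Phi<0$, the operator $\TF$ satisfies a Lasota--Yorke inequality on the space $BV(Y)$ of functions of bounded variation and is therefore quasi-compact there; combined with the mixing of $\mu$ this gives a decomposition $\TF = P + R$, where $P\phi = h\int\phi\,dm$ is the rank-one projection onto the line spanned by the invariant density $h$ (so $d\mu = h\,dm$, with $h\in BV$ and bounded above and below on $\mathrm{supp}\,\mu$), $PR = RP = 0$, and the spectral radius of $R|_{BV}$ is some $\theta_0<1$. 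Equivalently, $(Y,F,\mu)$ has exponential decay of correlations for $BV$ observables against $L^1(m)$ observables. The feature that makes \emph{balls} tractable --- and that fails in higher dimensions --- is that an interval $U\subset Y$ has $\mathrm{Var}\,\mathbf 1_U\le 2$, so that $\|\mathbf 1_U\|_{BV}$ stays bounded as $U$ shrinks; the same holds for cylinders, so the cylinder version comes for free.

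By the Haydn--Lacroix--Vaienti relation $\tilde G(t) = \int_t^\infty G(s)\,ds$ between the limiting return-time law $G$ and the limiting hitting-time law $\tilde G$ of a shrinking family (with $\int_0^\infty G = 1$ forced by Kac's lemma), it suffices to prove the exponential law for the first \emph{hitting} time $\tau_U(x):=\min\{j\ge 1: F^j x\in U\}$: that for $\mu$-a.e.\ centre $z$ and $U = U_\alpha(z)$,
\[
\mu\big(\tau_U > t/\mu(U)\big)\ \longrightarrow\ e^{-t}\qquad\text{as }\ \alpha\to 0 .
\]
Working with $\tau_U$ rather than with $r_U$ has the benefit of avoiding conditioning on $U$, hence the density $\mathbf 1_U/\mu(U)$ whose $BV$ norm blows up. Writing $N = \lfloor t/\mu(U)\rfloor$ and using $\Phi$-conformality of $m$ in the form $m\TF = m$ together with the duality $\int(\psi\circ F)\,\phi\,dm = \int\psi\,\TF\phi\,dm$, an induction on $N$ yields
\[
\mu(\tau_U > N)\ =\ m\big(\TF_U^{N}h\big),\qquad \TF_U\phi\ :=\ \TF\big(\mathbf 1_{Y\setminus U}\,\phi\big),
\]
so the whole problem reduces to the iterates of the ``holed'' operator $\TF_U$.

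Now $\TF_U$ is a small perturbation of $\TF$: one has $\|(\TF - \TF_U)\phi\|_{L^1} = \|\TF(\mathbf 1_U\phi)\|_{L^1}\le m(U)\,\|\phi\|_{BV}\to 0$, while the family $\{\TF_U\}_U$ obeys a uniform Lasota--Yorke inequality on $(BV,L^1)$ (the hole being a single interval, $\mathbf 1_{Y\setminus U}$ contributes only a bounded additive term). Hence the Keller--Liverani perturbation theorem for open systems applies: for $\alpha$ small, $\TF_U$ has a simple leading eigenvalue $\lambda_U$ with $\lambda_U\to 1$, a $BV$ eigenfunction $h_U\to h$, and a uniform spectral gap, so that
\[
m\big(\TF_U^{N}h\big)\ =\ c_U\,\lambda_U^{N} + O\big(\theta^{N}\big),\qquad c_U\to 1,\ \ \theta<1\ \text{ fixed}.
\]
Everything then comes down to the rate $1 - \lambda_U$. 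First-order perturbation theory gives $1 - \lambda_U = \mu(U)\,(1 - \Theta_U) + o(\mu(U))$, where $\Theta_U = \sum_{k=1}^{K(U)}\mu_U(F^k x\in U)$ is a truncated short-return probability, $K(U)\to\infty$ slowly (say $K(U) = o(1/\mu(U))$).

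The \emph{main obstacle} is to show $\Theta_U\to 0$ as $\alpha\to 0$ for $\mu$-a.e.\ $z$, i.e.\ to rule out that the orbit of $z$ returns close to $z$ too often. For bounded $k$ this is clear since $\mu$-a.e.\ $z$ is non-periodic, so $U_\alpha(z)\cap F^{-k}U_\alpha(z) = \es$ once $\alpha$ is small; the intermediate range of $k$ is controlled by the uniform expansion $\sup\Phi<0$ (bounding the number and size of branches of $F^k$ mapping part of $U$ back into $U$); and the large-$k$ tail is controlled by decay of correlations, which together with the truncation $K(U) = o(1/\mu(U))$ makes the $\sum_k \mu(U)^2$-type contributions vanish. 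This gives the statement along any fixed sequence of scales, and a Borel--Cantelli argument over a countable family $\alpha\downarrow 0$ upgrades it to the full $\mu$-a.e.\ statement. Granting this, $\lambda_U^{N} = \big(1 - \mu(U)(1 + o(1))\big)^{t/\mu(U)}\to e^{-t}$, and since $c_U\to 1$ and $O(\theta^{N})\to 0$ we obtain $\mu(\tau_U > t/\mu(U))\to e^{-t}$; the Haydn--Lacroix--Vaienti relation then delivers the exponential return-time law $G(t) = e^{-t}$, and the identical argument applies verbatim to cylinders since $\mathrm{Var}\,\mathbf 1_{\c_n}\le 2$ as well.
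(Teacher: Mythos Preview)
The paper does not actually prove this statement; it simply records it as Theorem~3.2 of \cite{BSTV}. There the argument runs through the Hirata--Saussol--Vaienti framework \cite{HSV}: Rychlik's quasi-compactness of $\TF$ on $BV$ gives exponential decay of correlations for $BV$ observables against $L^1$, and since indicators of intervals have uniformly bounded variation, one verifies directly the $\phi$-mixing and short-return hypotheses of \cite{HSV} that force the exponential law for return times.

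Your route is genuinely different. Instead of checking the \cite{HSV} hypotheses, you pass to hitting times via the Haydn--Lacroix--Vaienti duality and then read the law off the leading eigenvalue $\lambda_U$ of the holed operator $\TF_U=\TF(\mathbf 1_{Y\setminus U}\,\cdot\,)$ using Keller--Liverani spectral perturbation. This is a valid and, by now, standard alternative: it is conceptually cleaner, makes the escape rate $1-\lambda_U\sim\mu(U)$ the single quantity driving $e^{-t}$, and in principle also yields error terms. What it buys over \cite{BSTV} is a more transparent mechanism; what it costs is the need to set up the perturbation machinery carefully (uniform Lasota--Yorke for the family $\{\TF_U\}$, control of the triple norm). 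The two approaches meet at the same genuine obstacle, namely the control of short returns: your $\Theta_U\to 0$ at $\mu$-a.e.\ non-periodic centre is exactly the condition that \cite{BSTV,HSV} must also verify, and your treatment of the intermediate-$k$ range via $\sup\Phi<0$ is only sketched --- in Keller's refined formulation one identifies the limit of $(1-\lambda_U)/\mu(U)$ with an extremal index that equals $1$ precisely away from periodic orbits, and pinning this down takes real work. Both arguments ultimately stand on the same two pillars: the uniform $BV$ bound on interval indicators and Rychlik's spectral gap.
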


\begin{proof}[Proof of Theorem~\ref{thm:exp stats}]
Recall that $F_n$ is the induced map associated to the first return map to the
set $\hat J_n$ in the Hofbauer tower.
As in \cite{BTeqnat}, $F_n$ is a Rychlik map, with induced potential
$\Phi_n=-\delta\log|DF_n|- P(-\delta\log|DF_n|)\tau_{J_n}$.
The conformal measure $m_{\Phi_n}$ is constructed from the induced version of $m_{\delta}$.  Note that the expansivity property (3) follows since for all large $n$, $\sup|DF_n|>1$, and also
$P(-\delta\log|DF_n|)\ge 0$ for $\delta \in [0,1]$, as it is a decreasing function in $\delta$ and
$P(-\log|DF_n|)\ge 0$, see for example \cite{BTeqnat}.
We denote the equilibrium state for the inducing scheme by $\mu_{\delta,F_n}$.  So by Theorem~\ref{thm:exp for R}, each $(J_n,F_n,\mu_{\delta, F_n})$ has exponential return time statistics (\ie $G(t)=e^{-t}$).  Thus Theorem~\ref{thm:induced same as original} implies that $(I,f,\mu_\delta)$ also has exponential return time statistics.
\end{proof}

\begin{proof}[Proof of Corollary~\ref{cor:applications}]
By Theorem~\ref{thm:exp stats}, we only need to guarantee the existence of equilibrium and conformal measures.
The existence of the acip was proved in \cite{BR-LSS}.  The existence of the equilibrium states for $\delta\neq 1$ was proved in \cite{BTeqnat}.  In fact, in that paper we only proved the existence of the relevant conformal measures for inducing schemes.  However, as can be seen in the proof of Theorem~\ref{thm:exp stats}, that is all that is necessary to get exponential return time statistics.
\end{proof}

\begin{proof}[Proof of Proposition~\ref{prop:exp stats cyl}]
The existence of the equilibrium state for $(I, f, \phi)$
was proved in \cite[Theorem 3.4]{kellholder}.  Moreover, it is shown that
the Perron-Frobenius operator with respect to $\phi$-conformal
measure $m_{\phi}$:
\[
\TF:BV_{1,1/p} \to BV_{1,1/p}, \qquad
\TF\psi(x) = \sum_{y \in f^{-1}(x)} e^{\phi(y)} \psi(y)
\]
is quasi-conformal on the space of functions with bounded $p$-variation.
This space includes indicator functions on balls.
By the proof of \cite[Theorem 3.2]{BSTV}, which uses ideas of \cite{HSV},
these facts are sufficient to give exponential return time statistics to balls.
\end{proof}

\section{The Polynomial Gibbs Property}
\label{sec:W Gibbs}

We prove Theorem~\ref{thm:weak G} in two parts.  The case of equilibrium states for potentials in $\H$ is treated in Proposition~\ref{prop:poly Gibbs},
and then the upper and lower bounds for the acip is separated into
two lemmas.

Proposition~\ref{prop:poly Gibbs} relies on the fact that the
equilibrium states $\mu=\mu_\phi$ obtained in \cite{BTeqgen}
have exponential tails for an induced system, and also that $\phi\in \H$ are bounded.

\begin{prop}
There exists $\kappa\in (0,\infty)$ \st for $\mu$-a.e. $x$ there exists
$n_0=n_0(x)\in \N$ \st $n\ge n_0$ implies
$$
\frac 1{n^\kappa} \le \frac{\mu(\c_n[x])}{e^{S_n\phi(x)-nP(\phi)}} \le n^\kappa
$$
where $S_n \phi(x):=\phi\circ f^{n-1}(x)+\cdots + \phi(x)$.
\label{prop:poly Gibbs}
\end{prop}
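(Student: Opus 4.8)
The plan is to prove a \emph{genuine} Gibbs property on a fixed inducing scheme adapted to $\mu$, and then push it down to $(I,f,\mu)$ at a cost that is only polynomial in $n$; the polynomial (rather than worse) control comes from the exponential tails of the scheme and the boundedness of $\phi$. In the setting of Theorem~\ref{thm:weak G}(b) ($f\in NF_+^3$, $\phi\in\H$) recall from \cite{BTeqgen} that $\mu=\mu_\phi$ is compatible with an inducing scheme $(Y,F,\tau)$, $F|_{Y_i}=f^{\tau_i}$, which is a uniformly expanding Markov map ($|DF|\ge\lambda_0>1$) with bounded distortion (Koebe, via \cite{SV}), whose lift $\mu_F$ has exponential tails $\mu_F(\tau\ge m)\le Ce^{-\alpha m}$, and which reconstructs $\mu$ via \eqref{eq:muA} with $\Lambda=\int\tau\,d\mu_F<\infty$. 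Since $\phi$ is \hold and $F$ expands, the induced potential $\Phi:=S_\tau\phi-\tau P(\phi)$ is locally \hold for $F$ with zero pressure for $F$, so $\mu_F$ is its Gibbs measure: for some $K<\infty$ and $\mu$-a.e.\ $x$,
\[
K^{-1}\ \le\ \frac{\mu_F(\text{$i$-th $F$-cylinder at }x)}{\exp\big(S_{T_i}\phi(x)-T_iP(\phi)\big)}\ \le\ K,\qquad T_i=T_i(x):=\sum_{j=0}^{i-1}\tau(F^jx),
\]
the exponent being the $i$-th Birkhoff sum of $\Phi$ under $F$, which equals $S_{T_i}\phi(x)-T_iP(\phi)$ by additivity of Birkhoff sums along the $f$-orbit since $F^jx=f^{T_j}(x)$.

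For the lower inequality of the proposition I would argue as follows. For $\mu$-a.e.\ $x$ the orbit enters $\bigcup_iY_i$ at a finite time $q(x)$, after which there are infinitely many $f$-inducing returns $q(x)=p_0<p_1<\cdots$; by the ergodic theorem for $(F,\mu_F)$ and Kac's lemma the number $m=m(n)$ of these lying in $[q(x),n]$ satisfies $m\sim n/\Lambda$, and by the \bcl applied to the exponential tails, $\tau(F^m(f^{q(x)}x))\le\frac2\alpha\log m$ for all large $m$, so the overshoot $p_{m+1}-n=O(\log n)$. Since $\c_n[x]$ contains the $(m{+}1)$-st $F$-cylinder at $x$, pulled back through the fixed block $f^{q(x)}$, the reconstruction \eqref{eq:muA} and the induced Gibbs lower bound give $\mu(\c_n[x])\ge c(x)\exp\big(S_{p_{m+1}}\phi(x)-p_{m+1}P(\phi)\big)$; using boundedness of $\phi$ and the overshoot bound, $\big|S_{p_{m+1}}\phi(x)-S_n\phi(x)\big|+(p_{m+1}-n)|P(\phi)|\le\frac2\alpha(\|\phi\|_\infty+|P(\phi)|)\log n$, and enlarging $n_0(x)$ to absorb $c(x)$ yields the lower inequality with $\kappa$ essentially $\frac2\alpha(\|\phi\|_\infty+|P(\phi)|)$.

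For the upper inequality I would pass through the conformal measure. By \cite{kellholder}, on the relevant transitive component $\mu=h\,m_\phi$ with $h$ bounded, where $m_\phi$ is the conformal measure normalised so that $m_\phi(f(A))=\int_Ae^{P(\phi)-\phi}\,dm_\phi$ whenever $f|_A$ is injective; hence $\mu(\c_n[x])\asymp m_\phi(\c_n[x])$. Applying conformality to $\c_n[x]$ gives $m_\phi(f^n(\c_n[x]))=\int_{\c_n[x]}e^{nP(\phi)-S_n\phi}\,dm_\phi\ge e^{nP(\phi)-S_n\phi(x)}e^{-D_n(x)}m_\phi(\c_n[x])$, where $D_n(x):=\sup_{y\in\c_n[x]}|S_n\phi(y)-S_n\phi(x)|$, so that $\mu(\c_n[x])\le C\,m_\phi(I)\,e^{D_n(x)}\,e^{S_n\phi(x)-nP(\phi)}$. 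The upper inequality thus reduces to a \emph{polynomial distortion estimate}: $D_n(x)\le\kappa\log n$ for $\mu$-a.e.\ $x$ and all $n\ge n_0(x)$.

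Establishing this last estimate is \textbf{the main obstacle}. Since $\phi$ is \hold with exponent $\theta$, $D_n(x)\le C_\phi\sum_{j=0}^{n-1}|f^j(\c_n[x])|^\theta$, and I would split the sum along the inducing returns $p_0<\cdots<p_m\le n<p_{m+1}$. The final partial block and the fixed prefix contribute $\le(n-p_m)|I|^\theta+O_x(1)=O(\log n)$ by the overshoot bound. For an intermediate block $[p_{i-1},p_i)$ with $i$ at most $m$ minus a fixed constant, matching the $\P_1$-itinerary of $x$ for $n\ge p_i$ steps forces $\c_n[x]$ inside the $i$-th $F$-cylinder, so $f^{p_{i-1}}(\c_n[x])$ lies in a single domain $Y_{j_i}$ of $F$; bounded distortion of $f^{\tau_i}$ and of its intermediate iterates on $Y_{j_i}$, together with $|DF|\ge\lambda_0>1$, bound that block's contribution by a multiple of $\lambda_0^{-\theta(m-i)}\tau_i$, so (using $\tau_i=O(\log m)$ from the \bcl) the intermediate blocks sum to $O(\log n)$, giving $D_n(x)=O(\log n)$ overall. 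The delicate ingredients are the bounded-distortion and backward-contraction control of the $\delta$-extendible return branches of an $NF_+^3$ map (drawing on \cite{SV} and the $BC(2)$ property of \cite{BR-LSS}), which is what legitimises the per-block bound, and the verification that the cylinder images $f^{p_i}(\c_n[x])$ shrink geometrically in $m-i$; granting these, the boundedness of $\phi$ and the \bcl keep every error polynomial, with the final exponent $\kappa$ depending on the tail rate $\alpha$, on $\|\phi\|_\infty$ and $|P(\phi)|$, and on the distortion constants of the scheme.
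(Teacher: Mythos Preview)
Your lower bound is essentially the paper's argument: sandwich $\c_n[x]$ between two consecutive induced cylinders, apply the genuine Gibbs property of $\mu_F$ to the smaller one, and control the overshoot $\tau^{k}(x)-n\le\tau(F^{k-1}x)$ by the Borel--Cantelli Lemma applied to the exponential tails (the paper states this as a Claim: $\tau(F^k x)\le\kappa\log k$ eventually). The reduction to $x$ lying in the inducing domain, which you do via a first entrance time $q(x)$, is handled in the paper by passing to a point $y_1,y_2$ typical for $\mu_\Phi$; and the link between $\mu$ and $\mu_F$ you make through \eqref{eq:muA} is done in the paper via the finite Radon--Nikodym density $\rho(x)=\lim \mu_\Phi(U)/\mu_\phi(U)$. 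These are cosmetic differences.

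For the upper bound, however, you take a genuinely different and considerably harder route than the paper, and the hard step you isolate is not actually needed. The paper's upper bound is \emph{the mirror image of the lower bound}: since $\c_n[x]\subset \c_{\tau^{k-1}(x)}[x]$ (the $(k{-}1)$-st $F$-cylinder), the induced Gibbs property gives
\[
\mu_\Phi(\c_n[x])\ \le\ \mu_\Phi\big(\c_{\tau^{k-1}(x)}[x]\big)\ \le\ K\,e^{S_{\tau^{k-1}(x)}\phi(x)-\tau^{k-1}(x)P(\phi)},
\]
and then $\big|S_n\phi(x)-S_{\tau^{k-1}(x)}\phi(x)\big|+|P(\phi)|\,(n-\tau^{k-1}(x))\le (\|\phi\|_\infty+|P(\phi)|)\,\tau(F^{k-1}x)=O(\log n)$ by the same Borel--Cantelli claim. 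No conformal measure and no distortion estimate are required.

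Your alternative approach would also work \emph{if} the distortion bound $D_n(x)=O(\log n)$ holds, but the argument you sketch for it has a real gap. The per-block bound $\sum_{j=p_{i-1}}^{p_i-1}|f^j(\c_n[x])|^\theta\lesssim \tau_i\,\lambda_0^{-\theta(m-i)}$ requires not just bounded distortion of $F=f^{\tau_i}$ on $Y_{j_i}$ (which Koebe gives from extendibility) but bounded distortion of every \emph{intermediate} iterate $f^s$, $0<s<\tau_i$, on $Y_{j_i}$: without this, the images $f^{p_{i-1}+s}(\c_n[x])$ may be of order $|I|$ for many $s$ in the block (the orbit can pass close to $\crit$ and then expand), and the block contributes $\tau_i$ rather than $\tau_i\lambda_0^{-\theta(m-i)}$, so the total is $\sum_i\tau_i\asymp n$, not $O(\log n)$. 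Intermediate distortion is not a consequence of $\delta$-extendibility of $F$ alone, and neither Koebe on $F$ nor $BC(2)$ gives it directly; establishing it would require a separate argument specific to the tower-based schemes of \cite{BTeqgen}. Since the symmetric sandwiching bypasses all of this, you should replace your conformal-measure upper bound with the one-line argument above.
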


This result can be compared with Lemmas 3.2 and 3.3 of \cite{Pac}.

\begin{proof}
Here we use results from \cite{BTeqgen}, which are based on a slightly different type of inducing scheme to that in the rest of this paper.
So let us briefly explain these inducing schemes.
Let $\hat Y$ be an interval compactly contained in some domain $D\in \D$,
and such that $Y:=\pi(\hat Y)\in \P_n$ for some $n$.  Then for $x \in Y$, let $\tau(x)$ be the first return time of the point $\hat x:=\pi^{-1}(x)\cap \hat Y$ to $\hat Y$.  As explained in \cite{BTeqgen} (see also the proof of \cite[Theorem 3]{BTeqnat}), this gives an inducing scheme $(Y, F)$ with bounded distortion, where $F=f^\tau$.

For $x\in Y$ let $\Phi(x):= S_{\tau(x)} \phi(x) =
\phi\circ f^{\tau(x)-1}(x)+\cdots + \phi(x)$, and
$S_n \Phi(x) := \Phi \circ F^{n-1}(x)+\cdots + \Phi(x)$.
We denote the measure on the inducing scheme by $\mu_\Phi$.

Let $T_\phi$ and $T_\Phi$ denote the set of typical points of $\mu_\phi$ and $\mu_\Phi$ respectively.  Let $f^{k_1}(x)=y_1$ be the first time that $x$ maps into $T_\Phi$, and let $k_2\in \N$ be minimal \st there exists $y_2\in T_\Phi$ with $f^{k_2}(y_2)=x$.
For $n\ge \max\{k_1+\tau(x),k_2+\tau(y_2)\}$,
\begin{align*}
\mu_\phi(\c_n[x])& \le \mu_\phi(f^{-k_1}(\c_{n-k_1}[y_1])) = \mu_\phi(\c_{n-k_1}[y_1])\\
\mu_\phi(\c_n[x])&=\mu_\phi(f^{-k_2}(\c_{n}[x])) \ge \mu_\phi(\c_{n-k_2}[y_2]).
\end{align*}
Therefore, we may assume that $x\in T_\Phi$.

By the Gibbs property for $(Y,F,\mu_\Phi)$, there exists $K>0$ \st
$$
\frac1K\le \frac{\mu_\Phi(\c_{\tau^n(x)}[x])}{e^{S_n\Phi(x)}}\le K.
$$
We will use the fact that there exists $\rho(x)\in (0,\infty)$ \st for a
nested sequence of open sets $\{U_n\}_n$ \st $\cap_n U_n = \{ x \}$ as $n\to \infty$, we have $\frac{\mu_\Phi(U_n(x))}{\mu_\phi(U_n(x))} \to \rho(x)$.  Thus, for large enough $n$, the estimates we need for $\mu_\phi(\c_n[x])$ follow immediately from those for $\mu_\Phi(\c_n[x])$.

For each large $n$, there exists $k$ \st $\tau^{k-1}(x)< n\le \tau^k(x)$.  We get
$$
\frac{\mu_\Phi(\c_{\tau^k(x)}[x])}{e^{S_n\phi(x)} }\le  \frac{\mu_\Phi(\c_n[x])}{e^{S_n\phi(x)}} \le \frac{\mu_\Phi(\c_{\tau^{k-1}(x)}[x])}{e^{S_n\phi(x)}}.
$$
So the Gibbs property implies
$$
\frac{e^{S_{\tau^k(x)-n}\phi(f^n(x))}}{K}\le
\frac{\mu_\Phi(\c_n[x])}{e^{S_n\phi(x)}} \le
Ke^{-S_{n-\tau^{k-1}\phi(x)}(F^{k-1}(x))}.
$$
Since $|S_{\tau^k(x)-n}\phi(f^n(x))| \le \sup_{x\in I} |\phi(x)||\tau^k(x)-\tau^{k-1}(x)|$,
it is sufficient for the lower bound to show that $\tau(F^k(x)) \le \kappa\log n$ for all large $n$.
\begin{claim}
There exists $\kappa\in (0,\infty)$ \st for $\mu_\Phi$-a.e. $x\in Y$ there exists $k_0=k_0(x)\in \N$ \st $k\ge k_0$ implies $\tau(F^k(x))\le \kappa\log k.$
\label{claim:gaps}
\end{claim}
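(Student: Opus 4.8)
The plan is to deduce the claim from the exponential tail estimate for the inducing scheme $(Y,F,\mu_\Phi)$ together with the \bcls. Recall from \cite{BTeqgen} that, because $\phi\in\H$ is bounded (so that the induced potential has bounded variation on the inducing domains and the associated transfer operator has a spectral gap), the scheme $(Y,F)$ described above has \emph{exponential tails}: there are $C>0$ and $\alpha>0$ \st
\[
\mu_\Phi\bigl(\{x\in Y:\tau(x)>m\}\bigr)\le Ce^{-\alpha m}\qquad\text{for all }m\ge1 .
\]

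I would fix any $\kappa>1/\alpha$; this is the only constraint the claim imposes on $\kappa$, and it is compatible with enlarging $\kappa$ later in the proof of Proposition~\ref{prop:poly Gibbs}. For $k\ge2$ set
\[
A_k:=\{x\in Y:\tau(F^k(x))>\kappa\log k\}=F^{-k}\bigl(\{x\in Y:\tau(x)>\kappa\log k\}\bigr).
\]
Since $\mu_\Phi$ is $F$-invariant, for all $k$ large enough that $\kappa\log k\ge1$ the tail estimate gives
\[
\mu_\Phi(A_k)=\mu_\Phi\bigl(\{x\in Y:\tau(x)>\kappa\log k\}\bigr)\le Ce^{-\alpha\kappa\log k}=Ck^{-\alpha\kappa},
\]
using that $\tau$ is integer valued (harmlessly adjusting $C$). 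Because $\alpha\kappa>1$ we have $\sum_{k\ge2}\mu_\Phi(A_k)<\infty$, so the \bcl implies that for $\mu_\Phi$-a.e.\ $x\in Y$ the point $x$ lies in only finitely many $A_k$. Equivalently, there is $k_0=k_0(x)\in\N$ \st $\tau(F^k(x))\le\kappa\log k$ for all $k\ge k_0$, which is the claim.

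The argument is essentially routine; the one genuine input is the exponential tail bound, which is precisely what \cite{BTeqgen} supplies for this class of equilibrium states, the rest being only $F$-invariance of $\mu_\Phi$ and convergence of $\sum_k k^{-\alpha\kappa}$. The only point that then needs a little care downstream (back in the proof of Proposition~\ref{prop:poly Gibbs}) is that $n$ and $k$ are comparable for $\mu_\Phi$-a.e.\ $x$: since $\tau^{k-1}(x)<n\le\tau^k(x)$, the ergodic theorem applied to $\tau$ gives $n\asymp k\int\tau\,d\mu_\Phi$, so that the bound $\tau(F^k(x)),\ \tau(F^{k-1}(x))=O(\log k)$ translates into $O(\log n)$. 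The leftover ergodic-sum terms $|S_{\tau^k(x)-n}\phi(f^n(x))|$ and $|S_{n-\tau^{k-1}(x)}\phi(F^{k-1}(x))|$, bounded by $\sup_I|\phi|$ times those quantities, then exponentiate to a factor that is polynomial in $n$, after possibly increasing $\kappa$.
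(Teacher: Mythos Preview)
Your proof is correct and essentially identical to the paper's: both fix $\kappa>1/\alpha$, use $F$-invariance of $\mu_\Phi$ to bound $\mu_\Phi\{\tau\circ F^k>\kappa\log k\}$ by $Ck^{-\alpha\kappa}$ via the exponential tail estimate from \cite{BTeqgen}, and then apply the \bcls. Your additional remarks on the comparability of $n$ and $k$ are a helpful gloss on the surrounding argument but go beyond what the claim itself requires.
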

\begin{proof}
We use the fact that $(Y,F,\mu_\Phi)$ has exponential tails: in \cite{BTeqgen} it is shown that
there exists $\alpha>0$ such that $\mu_\Phi\{\tau\ge k\} \le Ce^{-\alpha k}$.
We fix $\kappa>\frac1\alpha$. Let $V_k:=\{x\in Y: \tau(F^k(x))> \kappa\log k\}$.  Since $\mu_\Phi$ is $F$-invariant and $V_k= F^{-k}\{\tau\ge \kappa\log k\}$, we have $\mu_\Phi(V_k) \le Cn^{-\alpha\kappa}$.
So by the \bcl we know that for $\mu_\Phi$-a.e. $x$ there exists $k_0=k_0(x)$ \st $k\ge k_0$ implies $x\notin V_k$.
\end{proof}
From this claim it follows that $\tau(F^k(x)) \le \kappa\log k$ for all large $k$.
Hence $\tau(F^k(x)) < \kappa\log n$ for all large $n$.  The upper bound follows similarly.
\end{proof}

We now show the polynomial Gibbs property for acips.
For $N\in \N$, $\ell >1$ and $K>0$, let $\A(N,\ell,K)$
be the set of maps in $NF^3$ with $\#\crit=N$ and with each critical point
$c\in \crit$ having order $\ell_c<\ell$ and satisfying
$$
|Df^n(f(c))|\ge K \hbox{ for all sufficiently large } n.
$$
Clearly, whenever $\min_{c \in \scrit} |Df^n(f(c))| \to \infty$ as
$n \to \infty$, $f$ it must lie in $\A(N,\ell,K)$ for some $N,\ \ell$
and any $K>0$.

We let $m$ denote Lebesgue measure on the interval $I$.
The following is proved in \cite[Proposition 4]{BR-LSS}.

\begin{prop}
Let $\ell>1$ and $N\in \N$.  There exists $K>0$ \st if $f\in \A(N,\ell, K)$
then there is $C > 0$ \st  for any Borel set $A$ and any $n \ge 0$,
$$
m(f^{-n}(A))\le Cm(f(A))^{\frac{1}{2\ell_{max}}}.
$$
\label{prop:multiacip}
\end{prop}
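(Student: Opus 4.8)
The plan is to recover \cite[Proposition 4]{BR-LSS}. First I would fix $K=K(N,\ell)$ large enough that every $f\in\A(N,\ell,K)$ satisfies the hypothesis of \cite{BR-LSS} — i.e. $\min_{c\in\scrit}\liminf_n|Df^n(f(c))|\ge K\ge L(N,\ell)$ — so that, by \cite[Theorem 1]{BR-LSS}, $f$ enjoys the backward contraction property $BC(2)$. I would then reduce the claim to showing that there is $C>0$ with $m(f^{-N}(B))\le C\,m(B)^{1/(2\ell_{max})}$ for every Borel set $B$ and every $N\ge1$; the stated inequality follows on taking $B=f(A)$ and $N=n+1$, since $f^{-n}(A)\subseteq f^{-(n+1)}(f(A))$. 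When $m(B)$ is bounded below by a fixed constant the inequality is trivial ($m(f^{-N}(B))\le|I|$), so I may assume $m(B)$ small.

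To see where the exponent comes from I would first do $N=1$, decomposing $f^{-1}(B)=\bigsqcup_{Z\in\P_1}(f^{-1}(B)\cap Z)$ over the finitely many monotone branches. Near a critical point $c$ of order $\ell_c$ one has $|Df(y)|\asymp|f(y)-f(c)|^{1-1/\ell_c}$, so
\[
m(f^{-1}(B)\cap Z)=\int_{B\cap f(Z)}\frac{dm(y)}{|Df((f|_Z)^{-1}y)|}
\le C\int_0^{m(B\cap f(Z))}t^{-(1-1/\ell_c)}\,dt\le C\,m(B)^{1/\ell_c},
\]
the integral being largest when $B\cap f(Z)$ hugs $f(c)$; summing over $\P_1$ gives $m(f^{-1}(B))\le C\,m(B)^{1/\ell_{max}}$. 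Iterating this bound naively would degrade the exponent to $\ell_{max}^{-N}$ and be useless, so the real point will be that the expansion of $f$ away from $\crit$ recovers almost all of the loss, leaving one $\ell_c$-th root and one square root (coming from $BC(2)$) in total.

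For general $N$ I would decompose $f^{-N}(B)=\bigsqcup_{Z\in\P_N}(f^{-N}(B)\cap Z)$; on each cylinder $f^N|_Z$ is a homeomorphism onto $f^N(Z)$ and $m(f^{-N}(B)\cap Z)=\int_{B\cap f^N(Z)}|Df^N\circ(f^N|_Z)^{-1}|^{-1}\,dm$. Fixing a small neighbourhood $V\supseteq\crit$, I would factor the cocycle $Df^N$ along each orbit into the excursions that avoid $V$ — on which Ma\~n\'e's theorem (valid since $f\in NF^2$, see \cite{MSbook}) gives uniform exponential growth, and the Koebe Lemma (available for $f\in NF^3$; cf. the remark before the statement, and \cite{SV,MSbook}) gives bounded distortion — and the near-critical passages inside $V$, where $|Df|\asymp\dist(\cdot,\crit)^{\ell_c-1}$ is small. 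The accumulated contraction produced by those passages is exactly what $BC(2)$ controls: it is the scale-invariant assertion that a pullback of a small interval shadowing a critical orbit shrinks, with a loss of at most a square root. Combining this with a single local power law $|f^i(Z)|\lesssim|f^{i+1}(Z)|^{1/\ell_c}$ at the first time $f^i(Z)$ meets $V$, one gets for each cylinder a bound of the shape $(\text{expanding factor})^{-1}\cdot(\text{portion of }m(B)\text{ seen by }Z)^{1/(2\ell_c)}$; the technical heart is that $BC(2)$ makes the sum of these over $Z\in\P_N$ converge, uniformly in $N$, to a constant times $m(B)^{1/(2\ell_{max})}$ — the expanding factors beating both the multiplicity of $f^N$ and the superadditivity of $t\mapsto t^{1/(2\ell_{max})}$ — while the purely expanding cylinders contribute at most $C\,m(B)$ by distortion and disjointness.

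The hard part is the one genuinely analytic step: promoting the weak, non-uniform growth $|Df^n(f(c))|\ge K$ to the uniform, scale-invariant estimate $BC(2)$. This is precisely \cite[Theorem 1]{BR-LSS}, and once it is in hand the rest is distortion bookkeeping along orbits — which is why in practice one simply cites it, as the authors do. I would also stress that the cylinder decomposition above is doing real work: merely knowing the estimate for intervals (equivalently, that the distribution function of $f^N_*m$ is H\"older) would not yield it for arbitrary Borel sets once the exponent drops to $\le\frac12$.
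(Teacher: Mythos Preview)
The paper does not prove this proposition at all: it is stated with the preface ``The following is proved in \cite[Proposition 4]{BR-LSS}'' and no further argument is given. Your proposal correctly identifies this as the source and then goes well beyond the paper by attempting to outline how that result is actually obtained; at the level of what the paper does, your approach coincides with it.
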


We can construct the invariant measure $\mu$ by taking a limit of the Ces\`aro means $\frac 1n\sum_{k=0}^{n-1}m\circ f^{-k}$.  From
Proposition~\ref{prop:multiacip}, it is easy to see that for an
$m$-measurable set $A$, we have \begin{equation} \mu(A) \le Cm(f(A))^{\frac{1}{2\ell_{max}}}\le Cm(A)^{\frac{1}{2\ell_{max}^2}}.\label{eq:ac}\end{equation}
In particular, $\mu \ll m$.

We prove Theorem~\ref{thm:weak G} for acips in two lemmas.  First the upper and then the lower bound.

\begin{lemma}[Upper bound] Fix $\gamma'>2$.  For $\mu$-a.e. $x$ there is
$n_0=n_0(x)$ such that for all $n \ge n_0$
$$\mu(\c_n[x])|Df^n(x)|\le n^{\gamma'}.$$
\end{lemma}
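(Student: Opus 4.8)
The plan is to estimate $\mu(\c_n[x])|Df^n(x)|$ by relating the cylinder $\c_n[x]$ to its image $f^n(\c_n[x])$, which is a fixed interval in the partition $\P_1$ (or more precisely, $f^n$ maps $\c_n[x]$ homeomorphically onto an element of $\P_1$, hence $|f^n(\c_n[x])|$ is bounded below by a constant). Since $f\in NF^3$, the map has no wandering intervals and we can use Koebe-type distortion control on cylinders via the $\delta$-extendible return structure; however, for a crude upper bound we do not even need bounded distortion: by the mean value theorem there is $\xi\in\c_n[x]$ with $|Df^n(\xi)|=|f^n(\c_n[x])|/|\c_n[x]|$, so $|\c_n[x]|=|f^n(\c_n[x])|/|Df^n(\xi)|$. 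The point $\xi$ is not $x$, so the plan is to first replace $x$ by a nearby point where the derivative is comparable. The cleanest route: use \eqref{eq:ac}, which gives $\mu(\c_n[x])\le C\,m(\c_n[x])^{1/(2\ell_{max}^2)}$, combined with a bound $m(\c_n[x])\le C/|Df^n(x)|^{\theta}$ for some $\theta>0$ coming from bounded distortion of $f^n$ on $\c_n[x]$ (which holds on the extendible part, and in general after the Koebe Lemma from \cite{SV}).

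Concretely, the steps I would carry out, in order, are: (1) Recall from the paragraph after \eqref{eq:ac} that $\mu(A)\le C\,m(f(A))^{1/(2\ell_{max})}$ for any measurable $A$. Apply this with $A=\c_n[x]$, so $\mu(\c_n[x])\le C\,m(f(\c_n[x]))^{1/(2\ell_{max})}$; iterating, or applying Proposition~\ref{prop:multiacip} to $f^{n}(\c_n[x])$ pulled back, gives $\mu(\c_n[x])\le C\,m(f^n(\c_n[x]))^{1/(2\ell_{max})}\cdot(\text{distortion factor})$. (2) Use that $f^n:\c_n[x]\to f^n(\c_n[x])$ is a homeomorphism onto an element of $\P_1$ with controlled distortion, so $m(\c_n[x])\asymp |f^n(\c_n[x])|/|Df^n(x)|$, hence $m(\c_n[x])\,|Df^n(x)| \asymp |f^n(\c_n[x])| \le |I|$, a constant. (3) Combine: $\mu(\c_n[x])\,|Df^n(x)| \le C\, m(\c_n[x])^{1/(2\ell_{max}^2)}\,|Df^n(x)|$; write this as $C\,\big(m(\c_n[x])\,|Df^n(x)|\big)^{1/(2\ell_{max}^2)}\cdot |Df^n(x)|^{1-1/(2\ell_{max}^2)}$ — this still has a growing derivative factor, so this naive split is not enough, and one instead wants the sharper distortion estimate $m(\c_n[x])\,|Df^n(x)|^{1+\eps}\le C$ on extendible cylinders, or to control $|Df^n(x)|$ subexponentially. (4) The honest route is therefore: show that for $\mu$-a.e.\ $x$, the orbit of $x$ is ``slowly recurrent'' to $\crit$, so that $\frac1n\log|Df^n(x)|\to\lambda(\mu)$ is not enough — we need $|Df^n(x)|$ grows at most polynomially along a subsequence of good return times. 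I would use that $x$ returns $\delta$-extendibly to the nested intervals $J_n$ with controlled (bounded) distortion, decompose $f^n$ at the last such extendible return time $\le n$, bound the distortion on the extendible part by Koebe, and bound the short remaining piece crudely; then $m(\c_n[x])\,|Df^n(x)|\le n^{\gamma_1}$ for $\gamma_1$ depending on the recurrence rate, which by a Borel–Cantelli argument (using \eqref{eq:ac} to control the measure of points recurring too fast) is $\le$ some fixed power of $n$ for a.e.\ $x$. Feeding $\gamma_1$ and the exponent $\frac1{2\ell_{max}^2}$ into step (1) gives $\mu(\c_n[x])\,|Df^n(x)|\le n^{\gamma'}$ with $\gamma'>2$ once $n\ge n_0(x)$.

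The main obstacle is Step (4): controlling the size of $m(\c_n[x])|Df^n(x)|$, equivalently the distortion of $f^n$ on $\c_n[x]$, for a $\mu$-typical $x$ whose forward orbit may approach the critical set. Away from extendible return times the distortion of $f^n$ is not bounded, so one must quantify how close $f^k(x)$ gets to $\crit$ and how often; the positive Lyapunov exponent plus liftability (Theorem~\ref{thm:lifting pos lyap}) gives that this recurrence is slow enough — $\mathrm{dist}(f^k(x),\crit)$ cannot decay faster than exponentially along a full-measure set — and combined with Proposition~\ref{prop:multiacip}/\eqref{eq:ac} via Borel–Cantelli one upgrades this to the polynomial bound. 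This is exactly the place where the weak growth condition on critical orbits (or just liftability) is used, and where the exponent $\gamma'>2$ emerges from the $\frac1{2\ell_{max}^2}$ in \eqref{eq:ac} together with the at-most-polynomial derivative growth at good times.
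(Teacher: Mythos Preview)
Your approach is over-engineered and Step~(4) is a genuine gap. You correctly identify that distortion of $f^n$ on $\c_n[x]$ is \emph{not} bounded in general, but your proposed fix --- decomposing at the last extendible return, bounding recurrence to $\crit$ via liftability, and then feeding this into \eqref{eq:ac} --- is never actually carried out, and it is not clear it would yield the stated exponent $\gamma'>2$ (as opposed to some exponent depending on $\ell_{\max}$). In particular, the claim that $m(\c_n[x])\,|Df^n(x)| \le n^{\gamma_1}$ for a.e.\ $x$ is precisely the hard part, and you have only sketched a strategy, not a proof.

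The paper's proof bypasses all of this: it uses \emph{no} distortion control, \emph{no} Proposition~\ref{prop:multiacip}, and \emph{no} recurrence estimates. Set $\gamma''=\gamma'/2>1$. First, the set $A_n$ of cylinders $\c_n$ with $\mu(\c_n)>n^{\gamma''}m(\c_n)$ satisfies $m(A_n)\le n^{-\gamma''}$ simply because $\mu$ is a probability measure (Markov-type inequality). Second, within each $\c_n$, the set $U(\c_n)=\{x:|Df^n(x)|>n^{\gamma''}/m(\c_n)\}$ has $m(U(\c_n))\le n^{-\gamma''}m(\c_n)$, because $\int_{\c_n}|Df^n|\,dm = m(f^n(\c_n))\le 1$; summing over cylinders gives $m(B_n)\le n^{-\gamma''}$. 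Two applications of Borel--Cantelli (for $m$, then transferred to $\mu$ via $\mu\ll m$) give that for $\mu$-a.e.\ $x$ and all large $n$, $\mu(\c_n[x])\le n^{\gamma''}m(\c_n[x])$ and $|Df^n(x)|\le n^{\gamma''}/m(\c_n[x])$, which multiply to give $n^{\gamma'}$. The paper explicitly remarks that this lemma does \emph{not} need Proposition~\ref{prop:multiacip}; your route builds on exactly the machinery that turns out to be unnecessary here.
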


\begin{proof}
Our proof follows \cite[Lemma 5]{BrV}.  We will use the \bcl applied to $m$ repeatedly here. Let $\gamma'':=\frac{\gamma'}2>1$. Let $W_n:=\{\c_n\in \P_n: \mu(\c_n)>n^{\gamma''}m(\c_n)\}$, and $A_n:=\bigcup_{\c_n\in W_n} \c_n$. Since $\mu$ is a probability
measure, we have $$1 \ge \mu(A_n)= \sum_{\c_n \in W_n}\mu(\c_n) \ge n^{\gamma''}\sum_{\c_n \in
W_n}m(\c_n) =n^{\gamma''}m(A_n).$$  Whence $m(A_n) \le n^{-\gamma''}$.  The \bcl implies
that $m$-a.e. $x\in I$ belongs to $A_n$ for only finitely many $n$.

Now for any $\c_n\in \P_n$, let $$U(\c_n)= \left\{x\in \c_n:|Df^n(x)|> \frac{n^{\gamma''}}{m(\c_n[x])}\right\}.$$  Then for $\c_n\in \P_n$,
$$
1 \ge m(f^n(\c_n))\ge  \int_{U(\c_n)}|Df^n(x)| dm \ge \frac{n^{\gamma''}}{m(\c_n)} m(U(\c_n)),
$$
so $m(\c_n)\ge n^{\gamma''}m(U(\c_n[x]))$. Letting $B_n := \bigcup_{\c_n\in \P_n}U(\c_n)$, we have $$m(B_n) = \sum_{\c_n\in \P_n}m(U(\c_n))\le n^{-\gamma''}\sum_{\c_n\in \P_n}m(\c_n) \le n^{-\gamma''}.$$  So again the \bcl implies that $m$-a.e. $x$ belongs to $B_n$ for only finitely many $n$.

Therefore since $\mu\ll m$, for $\mu$-a.e. $x\in I$ there exists some $n_0=n_0(x)$ \st $x\notin A_n\cup B_n$ for all $n \ge n_0$.  Thus $n\ge n_0$ implies
$$
\mu(\c_n[x])|Df^n(x)| \le n^{\gamma''}m(\c_n[x])
\left( \frac{n^{\gamma''}}{m(\c_n[x])}\right) = n^{\gamma'}
$$
and we have the required upper bound.
\end{proof}

Notice that, unlike the following lemma, the proof of the above lemma did not require Proposition~\ref{prop:multiacip}.

\begin{lemma}[Lower bounds]
For $\mu$-a.e. $x$ there is $n_0$ such that for
all $n \ge n_0$, and $\mu$ an acip,
$$
\frac1{n^{2\gamma}} \le \frac{|f^n(\c_n[x])|}{n^{\gamma}}
\le \mu(\c_n[x])|Df^n(x)|.
$$
\end{lemma}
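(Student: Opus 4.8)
The plan is to establish the two inequalities separately. Throughout, $m$ denotes Lebesgue measure; since $f\in NF_+^3$ we are free to use \eqref{eq:ac} and Proposition~\ref{prop:multiacip}, as well as $\mu\ll m$, whose density we write $h:=d\mu/dm$.

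\emph{The inequality $|f^n(\c_n[x])|\ge n^{-\gamma}$.} I would run a Borel--Cantelli argument on the sets $A_n:=\bigcup\{\c_n\in\P_n:|f^n(\c_n)|<n^{-\gamma}\}$. Since $\c_n\subset f^{-n}(f^n(\c_n))$ and $\mu$ is $f$-invariant, $\mu(A_n)\le\mu(f^{-n}(f^n(A_n)))=\mu(f^n(A_n))$. Now $f^n(\bd\c_n)\subset\bigcup_{j=1}^{n}f^{j}(\bd\P_1)$, a set of at most $Cn$ points, and $f^n$ is monotone on each $\c_n$, so the intervals $f^n(\c_n)$ making up $A_n$ take at most $Cn^2$ distinct values, each of length $<n^{-\gamma}$; hence $m(f^n(A_n))\le Cn^{2-\gamma}$. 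Then \eqref{eq:ac} (with $m(f(B))\le C\,m(B)$) gives $\mu(A_n)\le\mu(f^n(A_n))\le C\,n^{(2-\gamma)/(2\ell_{max})}$, which is summable because $\gamma>4\ell_{max}^2$ (indeed $\gamma>2+2\ell_{max}$ suffices). So the \bcl then gives $|f^n(\c_n[x])|\ge n^{-\gamma}$ for $\mu$-a.e.\ $x$ and all large $n$.

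\emph{Reduction of $\mu(\c_n[x])|Df^n(x)|\ge n^{-\gamma}|f^n(\c_n[x])|$.} As $\mu\ll m$, for $\mu$-a.e.\ $x$ one has $h(x)\in(0,\infty)$ and, by Lebesgue differentiation (the $\c_n[x]$ being intervals shrinking to $x$), $\mu(\c_n[x])/m(\c_n[x])\to h(x)$, so $\mu(\c_n[x])\ge\tfrac12 h(x)\,m(\c_n[x])$ for all large $n$. Since $|f^n(\c_n[x])|=\int_{\c_n[x]}|Df^n|\,dm$, it is enough to show that for $\mu$-a.e.\ $x$ and all large $n$,
\[
|Df^n(x)|\ \ge\ n^{-\gamma''}\,\frac1{m(\c_n[x])}\int_{\c_n[x]}|Df^n|\,dm
\]
for some fixed $\gamma''<\gamma$ (the factor $h(x)$ is then absorbed, as $2n^{-\gamma}/h(x)\le n^{-\gamma''}$ for large $n$). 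That is, I must bound the distortion of $f^n$ on the $n$-cylinder through $x$ polynomially along $\mu$-typical orbits.

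\emph{The distortion estimate --- the main obstacle.} Here I would follow \cite[Lemma 6]{BrV}. Setting $B_n:=\{x:|Df^n(x)|<n^{-\gamma''}m(\c_n[x])^{-1}\int_{\c_n[x]}|Df^n|\,dm\}$, one sees on each $\c_n$ that $m\!\left(f^n(B_n\cap\c_n)\right)=\int_{B_n\cap\c_n}|Df^n|\,dm\le n^{-\gamma''}|f^n(\c_n)|$, so the image of $B_n\cap\c_n$ is a small fraction of $f^n(\c_n)$; the difficulty is transporting this back to a bound on $m(B_n\cap\c_n)$, which needs distortion control precisely where $|Df^n|$ is smallest. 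A purely cylinderwise pull-back via Proposition~\ref{prop:multiacip} is too lossy, because $\#\P_n$ grows exponentially when $h_{top}(f)>0$; instead one exploits that $B_n\cap\c_n$ is non-empty only for cylinders on which the orbit of $f^n$ comes near $\crit$, so that $B_n$ lies essentially in a union of $f^{-k}$ ($k<n$) of small neighbourhoods of $f(\crit)$, whose $\mu$-measure is controlled by \eqref{eq:ac}, while on the other cylinders the backward contraction property $BC(2)$ of $f$ bounds the distortion of $f^n$ polynomially. This should yield $m(B_n)\le C\,n^{-c\gamma''}$ with $c>0$ depending only on $\ell_{max}$; since $\gamma>4\ell_{max}^2$ one may fix $\gamma''<\gamma$ with $c\gamma''>1$, and an application of the \bcl to $m$ (which suffices because $\mu\ll m$) then gives the displayed distortion bound for $\mu$-a.e.\ $x$ and all large $n$. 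Combining this with the reduction and with the first inequality completes the proof.
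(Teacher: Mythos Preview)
Your treatment of the inequality $|f^n(\c_n[x])|\ge n^{-\gamma}$ is fine and essentially parallel to the paper's, which bounds $m(W_n)$ directly via Proposition~\ref{prop:multiacip} rather than passing through $\mu$-invariance; either route works.

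The genuine gap is in the distortion step. You correctly observe that $m(f^n(B_n\cap \c_n))\le n^{-\gamma''}|f^n(\c_n)|$, but your plan for pulling this back is not a proof: the claim that $B_n\cap\c_n\neq\emptyset$ forces the orbit near $\crit$ is not justified, and the appeal to $BC(2)$ is a placeholder rather than an argument. Crucially, your set $f^n(B_n\cap\c_n)$ depends on the particular branch $\c_n$ (through $|Df^n|$), not merely on the interval $f^n(\c_n)$, so you cannot exploit the fact that there are only $O(n)$ distinct images $f^n(\c_n)$; a naive pullback via Proposition~\ref{prop:multiacip} then faces exponentially many cylinders, exactly the obstacle you name but do not overcome.

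The paper sidesteps this entirely by using the Koebe Lemma. The bad set is taken to be
\[
V_n:=\Big\{x:\ |f^n(x)-\bd f^n(\c_n[x])|<n^{-\gamma}|f^n(\c_n[x])|\Big\},
\]
i.e.\ points whose image lands in the $n^{-\gamma}$-boundary strip $E_n$ of $f^n(\c_n)$. This strip depends only on the interval $f^n(\c_n)$, of which there are at most $2n\,\#\crit$, so Proposition~\ref{prop:multiacip} gives $m(V_n)\le Cn^{1-\gamma/(2\ell_{max}^2)}$, which is summable. For $x\notin V_n$, Koebe (using the $\delta$-scaled neighbourhood inside $f^n(\c_n)$) yields
\[
|Df^n(x)|\ \ge\ \Big(\frac{n^{-\gamma}}{1+n^{-\gamma}}\Big)^2\frac{|f^n(\c_n[x])|}{|\c_n[x]|},
\]
and then $\mu(\c_n[x])\ge b\,m(\c_n[x])$ with $b:=\inf_{\supp\mu}d\mu/dm>0$ (proved via the induced system) finishes the bound uniformly. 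The missing idea in your attempt is precisely this geometric reformulation of ``small derivative'' as ``image near the boundary'', which is what makes Koebe applicable and what reduces the count from $\#\P_n$ to $O(n)$.
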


\begin{proof}
Let
$$
V_n:=\Big\{x\in I:|f^n(x)-\bd f^n(\c_n[x])|<n^{-\gamma}|f^n(\c_n[x])|\Big\}.
$$
For $x\in I$, denote the part of $f^n(\c_n[x])$ which lies within $n^{-\gamma}|f^n(\c_n[x])|$ of the boundary of $f^n(\c_n[x])$ by $E_n[x]$.  We will estimate the Lebesgue measure of the pullback $f^{-n}(E_n[x])$.  Note that this set consists of more than just the pair of connected components $\c_n[x]\cap V_n$.

Clearly, $m(E_n[x]) \le 2 n^{-\gamma}m(f^n(\c_n[x]))$.  Hence from \eqref{eq:ac}, which follows from Proposition~\ref{prop:multiacip}, we have
$$
m(V_n\cap f^{-n}(E_n[x]))
\le K_0(2n^{-\gamma}m(f^n(\c_n[x])))^{\frac1{2\ell_{max}^2}}
\le 2K_0 n^{-\frac{\gamma}{2\ell_{max}^2}}.
$$
There are at most $2n\# \crit$ domains $f^n(\c_n[x])$, hence $$m(V_n)\le Cn^{1-\frac{\gamma}{2\ell_{max}^2}}.$$  For $\gamma>4\ell_{max}^2$ we have $\sum_n m(V_n)<\infty$.  So by the \bcl for $m$-a.e. $x$ there exists $n_0$ \st $x\notin V_n$ for $n\ge n_0$.

We fix $0<\delta<1$ and may assume that $n_0^{-\gamma}<\delta$.  Let $\tilde \c_n[x]\subset \c_n[x]$ be the maximal interval for which $d(f^n(\tilde \c_n[x]),\bd f^n(\c_n[x]))=\frac\delta2|f^n(\c_n[x])|$.  Then for $x$ as above, by the Koebe Lemma we obtain for $n\ge n_0$,
$$
|Df^n(x)| \ge \left( \frac{n^{-\gamma}}{1+n^{-\gamma}}\right)^2\frac{|f^n(\tilde \c_n[x])|}{|\tilde \c_n[x]|} \ge
\left(\frac{1-\delta}{2n^\gamma}\right) \ \frac{|f^n(\c_n[x])|}{|\c_n[x]|}.
$$
Letting $b:=\inf_{x\in \mbox{\tiny supp}(\mu)}\frac{d\mu}{dm}(x)$,
we have
$$\mu(\c_n[x])|Df^n(x)| \ge b\left(\frac{1-\delta}{2n^\gamma}\right)|f^n(\c_n[x])|$$
and the first part of the proof is finished if we can show that
$b>0$. Notice that since this works for $m$-a.e. $x$, it must
also work for $\mu$-a.e. $x$.  To understand why $b>0$, first note
that by the Folklore Theorem, see \cite{MSbook}, the invariant measure $\mu_F$ for the induced system
$(Y,F)$ has $b'>0$ so that $\frac{d\mu_F}{dm}\ge b'$ on $\supp(\mu_F)$.
Also, there exists $N$ such that
$\supp(\mu) \subset \overline{ \cup_{k=0}^N f^k(Y)}$.
Given $y \in \supp(\mu)$ and a
set $U\subset Y$ so that $y \in f^k(U)$ for $k \le N$,
we have
$$
\frac{\mu(f^k(U))}{m(f^k(U))} \ge \frac{\mu(f^k(U))}{\int_{U}|Df^k|~dm}
\ge \frac{\mu(U)}{m(U) (\sup|Df|)^k} \ge \frac{b'}{(\sup|Df|)^k}.
$$
Then shrinking $U$ we see that $\frac{d\mu}{dm}(y)>b$
where $b:= \frac{b'}{(\sup|Df|)^N}$.

Let $W_n:=\{x\in I:|f^n(\c_n[x])|\le n^{-\gamma}\}$.  For each domain $Z_n$ of $W_n$, we choose a point $x_k\in Z_n$, so that $W_n=\cup_{k=1}^{p_n}Z_n[x_k]$.  We have

$$
m(W_n) = m\left(\bigcup_{k=1}^{p_n}Z_n[x_k]\right) \le   m\left(\bigcup_{k=1}^{p_n}f^{-n}[f^n(Z_n[x_k])]\right) \le (2n\#\crit) n^{-\frac\gamma{2\ell_{max}^2}}.
$$
Since $\gamma>4\ell_{max}^2$, the \bcl implies that for $\mu$-a.e. $x\in I$ there
is some $n_0\ge 1$ \st $n\ge n_0$ implies $m(f^n(\c_n[x])) \ge n^{-\gamma}$.  Combining this lower bound with the one above, we are finished.
\end{proof}

\section{Entropy fluctuations}
\label{sec:fluc}

In this section we prove Theorem~\ref{thm:fluc}.  This follows the same path as the proof of Theorem 3 in \cite{BrV}.
In the case that $\mu$ is an acip, a sketch of the proof is as follows:

{\bf Step 1:} The log-normal fluctuations in the Ornstein-Weiss Theorem
follow (using \cite{Sau}) from
\\
(i) exponential return time statistics to cylinders (which is true for our equilibrium states by Proposition~\ref{prop:exp stats cyl}, and for acips by Theorem~\ref{thm:exp stats} applied to cylinders); and
\\
(ii) log-normal fluctuations in the Shannon-McMillan-Breimann Theorem.

{\bf Step 2:} Condition (ii) reduces to the usual Central Limit Theorem for
the observable $\phi = \log |Df| - \int \log|Df| \ d\mu$, provided
there is $\alpha < \frac12$ such that
\[
\frac1{n^\alpha}\le \Big| \log(\ \mu(\c_n[x]) |Df^n(x)|\ ) \Big| \le n^\alpha
\]
for $\mu$-a.e. $x$ and $n$ sufficiently large.
Our polynomial Gibbs property clearly implies this.

{\bf Step 3:} To prove the CLT for $\phi$, we need Gordin's Theorem
(see \cite[Theorem 6]{BrV}), for which we need to verify that
$\phi \in L^2(\mu)$.  Let us do that here.

\begin{lemma}
The potential $\phi:=\log|Df|-\int\log|Df|~d\mu$ belongs to $L^2(\mu)$.
\label{lem:potential}
\end{lemma}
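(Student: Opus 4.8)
The plan is to prove the slightly stronger statement that $\log|Df|\in L^2(\mu)$; since $\mu$ is a probability measure this forces $\log|Df|\in L^1(\mu)$, so $\int\log|Df|\,d\mu$ is a finite constant and $\phi$ differs from an $L^2(\mu)$ function by that constant, hence $\phi\in L^2(\mu)$. Since $f\in NF_+^3\subset C^3$ and $I$ is compact, $|Df|$ is bounded above, and away from the finite set $\crit$ it is bounded below by a positive constant; thus $\log|Df|$ is bounded on $I\sm\bigcup_{c\in\crit}B_c$ for any fixed neighbourhoods $B_c=(c-\eps_c,c+\eps_c)\cap I$. The only obstruction is therefore the (possibly one-sided) logarithmic blow-up of $\log|Df|$ near each of the finitely many $c\in\crit$, so it suffices to show $\int_{B_c}(\log|Df|)^2\,d\mu<\infty$ for one small enough $\eps_c$.

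By non-flatness at $c$ we have $|Df(x)|\ge C^{-1}|x-c|^{\ell_c-1}$ for $0<|x-c|<\eps_c$; combined with the global upper bound on $|Df|$ this gives $(\log|Df(x)|)^2\le C\big(1+(\log|x-c|)^2\big)$ on $B_c\sm\{c\}$, so it remains to bound $\int_{B_c}(\log|x-c|)^2\,d\mu$. Here I would invoke the H\"older-type control of $\mu$ near points recorded in \eqref{eq:ac}. Under hypothesis (a) of Theorem~\ref{thm:fluc} we have $|Df^n(f(c))|\ge Cn^\beta\to\infty$, so $f\in\A(N,\ell,K)$ for suitable $N,\ell$ and any $K$, Proposition~\ref{prop:multiacip} applies, and \eqref{eq:ac} yields $\mu(\{\,|x-c|<r\,\})\le Cr^{\theta}$ with $\theta:=\frac1{2\ell_{max}^2}>0$. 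In particular $\mu\ll m$, so $\mu$ is nonatomic and $\mu(\{c\})=0$.

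Finally I would decompose $B_c$ dyadically about $c$: choosing $k_0$ with $2^{-k_0}<\eps_c$ and setting $A_k:=\{\,2^{-k-1}\le|x-c|<2^{-k}\,\}$ for $k\ge k_0$, on $A_k$ one has $(\log|x-c|)^2\le(k+1)^2(\log2)^2$ and $\mu(A_k)\le\mu(\{\,|x-c|<2^{-k}\,\})\le C2^{-k\theta}$, while the integrand is bounded on $B_c\cap\{\,|x-c|\ge2^{-k_0}\,\}$. Hence
\[
\int_{B_c}(\log|x-c|)^2\,d\mu\ \le\ (k_0\log2)^2\ +\ C(\log2)^2\sum_{k\ge k_0}(k+1)^2\,2^{-k\theta}\ <\ \infty ,
\]
the series converging because the geometric factor dominates the polynomial one. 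Summing over the finitely many $c\in\crit$ and adding the bounded contribution from $I\sm\bigcup_{c}B_c$ gives $\log|Df|\in L^2(\mu)$, as required. The only real content is the quantitative estimate \eqref{eq:ac}; everything else is elementary, with the polynomial growth hypothesis of Theorem~\ref{thm:fluc}(a) entering only through Proposition~\ref{prop:multiacip}.
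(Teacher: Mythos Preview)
Your proof is correct and follows essentially the same approach as the paper's: reduce to $\log|Df|\in L^2(\mu)$, control $|\log|Df(x)||$ near each critical point by $|\log|x-c||$ via non-flatness, use the H\"older estimate \eqref{eq:ac} (coming from Proposition~\ref{prop:multiacip}) to bound $\mu(B_r(c))$, and finish with a dyadic decomposition. Your write-up is in fact a bit more careful than the paper's (e.g.\ noting $\mu(\{c\})=0$ and handling the constant $\int\log|Df|\,d\mu$ explicitly), but the substance is identical.
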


\begin{proof}
Clearly it is enough to show that $\log|Df|\in L^2(\mu)$.  Clearly,
there exists some $C>0$ \st $\log|Df(x)| \le C\ell_{max}\log|x-\crit|$.
Also by construction of $\mu$ and Proposition~\ref{prop:multiacip}, we have $\mu(B_\eps(c))\le
C\eps^{\frac1{2\ell_{max}^2}}$ for any $c\in \crit$.  For a given $c\in \crit$, let $U$ be a neighbourhood of $c$ which is away from any other element of $\crit$.  We have
\begin{align*}
\int_U(\log|Df(x)|)^2~d\mu & \le
2 \sum_n \int_{(c+2^{-(n+1)},c+2^{-n})}(\log|Df|)^2~d\mu\\
& \le 2C \sum_n 2^{\frac{1-n}{2\ell_{max}^2}}|C\ell_{max}\log 2^{-n}|^2\\
& \le 4C^3 \ell_{max}^2(\log 2)^2\sum_n n^2 2^{-\frac{n}{2\ell_{max}^2}}
<\infty.
\end{align*}
Since we can perform such a calculation at every critical point,
the lemma is proved.
\end{proof}

{\bf Step 4:} Finally, to apply Gordin's Theorem, we follow pages 91-93 of \cite{BrV} verbatim, except that neighbourhoods $B(c,L^{-n})$ and $B(c,n^{-5})$ of the critical point $c$ need to be replaced by neighbourhoods $B(\crit,L^{-n})$ and $B(\crit,n^{-5})$ of $\crit$.
The argument in \cite[page 93]{BrV} that
$\int_\Delta |P^n(\tilde \phi \tilde h)| d\tilde m$ decays sufficiently fast
can be done in the multimodal case too, using \cite{BrLS}
and finally using \cite{BR-LSS} to remove the assumption
from  \cite{BrLS} that all critical points have the same order.  (See the use of \cite[Lemma 9]{BTeqnat} for an application of this method.)

For $\mu$ an equilibrium state for a potential $\phi \in \H$, the proof is simplified.  Step 1 is the same, so we only need to know that $\mu$ satisfies the weak Gibbs property, coupled with the fact that $\phi$ satisfies the CLT for $(I,f,\mu)$.  The first fact follows from Proposition~\ref{prop:exp stats cyl}, and the second follows from \cite[Theorem 3.3]{kellholder}.

\medskip
\noindent
Department of Mathematics\\
University of Surrey\\
Guildford, Surrey, GU2 7XH\\
UK\\
\texttt{h.bruin@surrey.ac.uk}\\
\texttt{http://www.maths.surrey.ac.uk/}

\medskip
\noindent
Department of Mathematics\\
University of Surrey\\
Guildford, Surrey, GU2 7XH\\
UK\footnote{
{\bf Current address:}\\
Departamento de Matem\'atica Pura\\
Faculdade de Ci\^encias da Universidade do Porto\\
Rua do Campo Alegre, 687\\
4169-007 Porto\\
Portugal\\
}\\
\texttt{mtodd@fc.up.pt}\\
\texttt{http://www.fc.up.pt/pessoas/mtodd/}

\end{document}